\newtheorem{theorem}{Theorem}
\newtheorem*{theorem*}{Theorem}
\newtheorem{lemma}[theorem]{Lemma}
\newtheorem{corollary}[theorem]{Corollary}
\theoremstyle{definition}
\newtheorem{definition}[theorem]{Definition}
\numberwithin{theorem}{section}
\newcommand{\Tmix}{\mathsf{T}_{\mathsf{mix}}}
\newcommand{\Treset}{\mathsf{T}_{\mathsf{reset}}}
\newcommand{\Tbestmix}{\mathsf{T}_{\mathsf{bestmix}}}
\newcommand{\newG}{G^*}
\newcommand{\newnewG}{G^{**}}
\newcommand{\newH}{H^*}
\newcommand{\newpi}{\pi^*}
\newcommand{\newdeg}{\deg^*}
\newcommand{\tree}[2]{\mathcal{T}_{#1,#2}}
\newcommand{\double}[2]{\mathcal{D}_{#1,#2}}
\newcommand{\pess}[1]{#1'}
\newcommand{\Tnd}{\tree{n}{d}}
\newcommand{\Dnd}{D_{n,d}}
\newcommand{\Dns}{D_{n,s}}
\DeclareMathOperator {\Ret}{Ret}
\begin{document}

\title{The Exact Mixing Time for Trees with Fixed Diameter}

\author{Andrew Beveridge, Kristin Heysse, Rhys O'Higgins and Lola Vescovo \\
Department of Mathematics, Statistics and Computer Science \\
Macalester College \\
Saint Paul, MN 55105 }

\date{}

\maketitle

\begin{abstract}
We characterize the extremal structure for the exact mixing time for random walks on trees $\tree{n}{d}$ of order $n$ with diameter $d$. Given a graph $G=(V,E)$,  let $H(v,\pi)$ denote the expected length of an optimal stopping rule from vertex $v$ to the stationary distributon $\pi$. We show that the quantity $\max_{G \in \tree{n}{d} } \Tmix(G) = \max_{G \in \tree{n}{d} }  \max_{v \in V} H(v,\pi)$ is achieved uniquely by the balanced double broom. 
%\keywords{random walk \and stopping rule \and mixing time \and tree}
% \PACS{PACS code1 \and PACS code2 \and more}
%\subclass{05C05 \and 05C81 \and 60J10 \and 60G40  }
\end{abstract}

%%%%%%% changes:

% lemma:two-leaf-pi-access-time for both v_0 and v_d
% lemma:caterpillarify-hitting-times-v2 (a), (b), (c) for all v_i  and for a maximal path instead of pessimal path
% lemma:caterpillarify-mixing-time-step1: moved part (d) from previous lemma since it needs pessimal path

%%%%%%%%%%%%%%%%%%%%%%%%
%
%  INTRO
%
%%%%%%%%%%%%%%%%%%%%%%%%%

%%%%%%%%%%%%%%%%%%%%%%%%%%%%%%%%%%%%%%%%%%%%%%%%%%%%%%
% Intro
%%%%%%%%%%%%%%%%%%%%%%%%%%%%%%%%%%%%%%%%%%%%%%%%%%%%%%
\section{Introduction}

Random walks on graphs have been investigated for well over a century. These diffusion problems have important contemporary applications, such as understanding the flow of information across networks, community detection and ranking the relative importance of the network vertices; see Masuda et al.~\cite{MPL2017} for a survey of network applications.

Let $G= (V(G),E(G))$ be a connected graph.  A \emph{random walk} on $G$ starting at vertex $w$ is a sequence of vertices $w=w_0, w_1, \ldots, w_t, \ldots$ such that  for $t \geq 1$, we have  
$$
\Pr(w_{t+1} = v \mid w_t = u \} =
\left\{ 
\begin{array}{cl}
1/\deg(u) & \mbox{if } (u,v) \in E(G), \\
 0  & \mbox{otherwise}.
 \end{array}
 \right.
 $$
The  \emph{hitting time} $H(u,v)$ is the expected number of steps before a walk started at $u$ hits vertex $v$, where we define $H(u,u)=0$ for the case $u=v$. 

For non-bipartite $G$, the distribution of $w_t$ converges to the \emph{stationary distribution} $\pi$, where $\pi_v = \deg(v)/2|E|$.  For bipartite $G$, we have the same convergence if we consider a \emph{lazy walk}, in which we remain at the current state with probability $1/2$, at the cost of doubling the expected length of any walk. Herein, we will consider non-lazy walks on trees because they are are simpler to analyze;  our main result also holds for lazy walks, though we need to multiply our expressions by a factor of two.   For an introduction to random walks on graphs, see  H\"aggstr\"om \cite{Haggstrom2002} or Lov\'asz \cite{Lovasz1996}.

Recent decades have witnessed an extensive study of  extremal graph structures for random walks, and for trees in particular. 
For example, Brightwell and Winkler \cite{BW1990} showed that among trees of order $n$, the star has the minimum cover time (starting from the center) and the path has the maximum cover time (starting from a leaf).  Beveridge and Wang \cite{BW2013} 
 identified the star and the path as the extremal structures for the \emph{exact mixing time}, which is the length of an optimal stopping rule that obtains an exact sample from the stationary distribution $\pi$. In addition to fixing the order $n$ of the graph, we can also fix the diameter $d = \max_{u,v \in V} d(u,v)$, where $d(u,v)$ is the distance between vertices $u$ and $v$.

 \begin{definition}
     The family of trees of order $n$ with diameter $d$ is denoted by $\tree{n}{d}$.
 \end{definition}

 Ciardo, Dahl and Kirkland \cite{CDK2022} proved upper and lower bounds on Kemeny's constant 
 $$\sum_{u \in V} \sum_{v \in V} \pi_u \pi_v H(u,v)$$ 
 for trees in  $\tree{n}{d}$.  We contribute to this latest effort and characterize the extremal structures in $\tree{n}{d}$ for the exact mixing time of an optimal stopping rule. 
%Rather than simply following a random walk for a fixed number of steps, we can use a more sophisticated \emph{exact stopping rule}.
Given an initial distribution $\sigma$ and a target distribution $\tau$, a $(\sigma,\tau)$-\emph{stopping rule} $\Gamma$ halts a random walk whose initial state is drawn from $\sigma$ so that the distribution of the final state is $\tau$. or a given pair of distributions, there are a variety of $(\sigma,\tau)$-stopping rules. See \cite{LW1995} for details.

Our main result concerns a mixing measure for exact stopping rules. We  generalize the notion of a hitting time $H(u,v)$ between pairs of vertices to the \emph{access time}  $H(\sigma,\tau)$ between pairs of distributions on $V(G)$. Given a $(\sigma,\tau)$-stopping rule $\Gamma$, we let $\mathbb{E}(\Gamma)$ denote the expected number of steps of this stopping rule. The access time
$$
H(\sigma, \tau) = \min \{
\mathbb{E}(\Gamma) : \Gamma \mbox{ is a $(\sigma,\tau)$-stopping rule}
\}.
$$
is the minimum expected length of stopping rules that achieve $\tau$ when started from $\sigma$. A $(\sigma,\tau)$-stopping rule is \emph{optimal} if it achieves this minimum expected length. The most important  measure for exact stopping rules on graph $G$ is the \emph{mixing time}
\begin{equation}
\label{eqn:mixing-time-def}
\Tmix(G) = \max_{v \in V} H(v,\pi)
\end{equation}
where, for convenience, we allow  $v$ to denote the singleton distribution concentrated on that vertex. In other words, the (exact) mixing time for graph $G$ is the expected time to obtain a sample from $\pi$ via an optimal stopping rule, starting from the worst possible initial vertex. 
This mixing time leads to a family of extremal problems: given a collection of graphs, which one maximizes, or minimizes, the mixing time? 

Beveridge and Wang \cite{BW2013} prove that for trees on $n$ vertices, we have
\begin{equation}
\label{eqn:mixing-all-trees} 
\frac{3}{2} \leq \Tmix(G) \leq \frac{2n^2-4n+3}{6},
\end{equation}
where the minimum is achieved uniquely by the star $S_n=K_{1,n-1}$ and the maximum is achieved uniquely by the path $P_n$. Herein, we consider a refinement of this question by further restricting ourselves to trees of order $n$ with fixed diameter $d$. Our maximizing tree structure will be a \emph{balanced double broom} graph.

\begin{definition}
\label{def:broom}
A double broom $G \in \tree{n}{d}$  consists of a path $v_1, \ldots, v_{d-1}$ with $\ell \geq 1$ (resp. $r \geq 1$) pendant edges incident with $v_1$ (resp. $v_{d-1}$), where we label one of these leaves as $v_0$ (resp.~$v_d$). 
We use $\double{n}{d}$ to denote the set of double brooms on $n$ vertices with diameter $d$. Note that such double brooms satisfy $n=\ell+r+d+1$. Finally, we define $\Dnd \in \double{n}{d}$ to be the \emph{balanced double broom} which satisfies $\ell = \lceil (n-d-1)/2 \rceil$ and  $r = \lfloor(n-d-1)/2 \rfloor$. 
\end{definition}

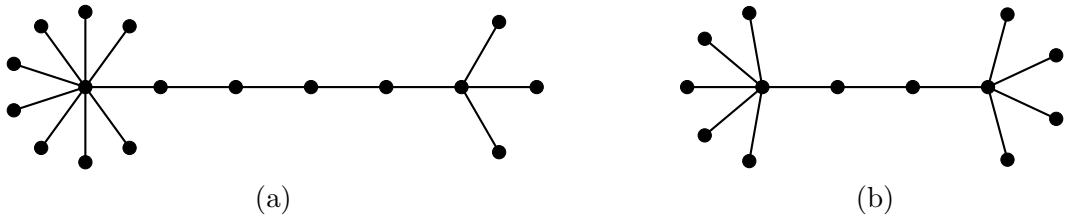
\begin{figure}[ht]
\begin{center}
\begin{tikzpicture}[scale=0.7]

\begin{scope}

\draw[thick] (0,0) -- (6,0);

\foreach \x in {0,1,2,3,4,5,6}
{
\draw[fill] (\x,0) circle (2.5pt);

}

\foreach \x in {54, 90, 126, 162, 198, 234, 270, 306}
{
\draw[thick] (0,0) -- (\x:1);
\draw[fill] (\x:1) circle (2.5pt);
}

\begin{scope}[shift={(5,0)}]

\foreach \x in {60, -60}
{
\draw[thick] (0,0) -- (\x:1);
\draw[fill] (\x:1) circle (2.5pt);
}

\end{scope}

\node at (2.5,-1.5) {\small (a)};

\end{scope}

%%%%%%

\begin{scope}[shift={(9,0)}]

\draw[thick] (0,0) -- (3,0);

\foreach \x in {0,1,2,3}
{
\draw[fill] (\x,0) circle (2.5pt);

}

\foreach \x in {100,140,180,220,260}
{
\draw[thick] (0,0) -- (\x:1);
\draw[fill] (\x:1) circle (2.5pt);
}

\begin{scope}[shift={(3,0)}]

\foreach \x in {75, 25, -25, -75}
{
\draw[thick] (0,0) -- (\x:1);
\draw[fill] (\x:1) circle (2.5pt);
}

\end{scope}

\node at (1.5,-1.5) {\small (b)};

\end{scope}

\end{tikzpicture}
\end{center}

\caption{(a) A double broom from $\double{15}{7}$. (b) The balanced double broom $D_{13,5}$. }
\label{fig:double-broom}

\end{figure}

See Figure \ref{fig:double-broom} for an example of a double broom and a balanced double broom. We establish the balanced double broom $\Dnd \in \tree{n}{d}$ as the unique maximizer of the expected length of an optimal stopping rule to the stationary distribution $\pi$, given that we start at the worst possible vertex, as per equation \eqref{eqn:mixing-time-formula}.

\begin{theorem}
\label{thm:broom-max-mixing}
For all $d \geq 3$, we have
$$\max_{G \in \Tnd} \Tmix(G)
= \Tmix(\Dnd)
$$
and the balanced double broom $\Dnd$ is the unique maximizer. This mixing time depends on the relative parities of $n$ and $d$:
\begin{align}
\label{eqn:broom-max-mixing-odd}
\Tmix(\Dnd) &=
\frac{(d-2)n-d+5}{2} - \frac{d^3-6d^2+8d}{6(n-1)}
& \mbox{if } n-d \mbox{ is odd}, \\
\label{eqn:broom-max-mixing-even}
\Tmix(\Dnd) &=
\frac{(d-2)n-d+5}{2} - \frac{d^3-6d^2+11d-6}{6(n-1)}
&\mbox{if $n-d$ is even}.
\end{align}
\end{theorem}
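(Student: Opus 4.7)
The plan is to apply the Lovász--Winkler access-time identity
\[
H(v,\pi) \;=\; \max_{u \in V}\bigl(H(v,u) - H(\pi,u)\bigr),
\qquad H(\pi,u) := \sum_{w \in V} \pi_w\, H(w,u),
\]
and exploit the clean structure of hitting times on trees: since every pair of vertices is joined by a unique path, both $H(v,u)$ and $H(\pi,u)$ decompose into edge-contributions along that path, each depending only on the sizes of the two subtrees produced by deleting the edge. My first step is to show that for any $T \in \tree{n}{d}$, the pair $(v,u)$ witnessing $\Tmix(T) = \max_{v,u}\bigl(H(v,u)-H(\pi,u)\bigr)$ can be taken to be the two endpoints of a diametral path. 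This yields a closed form for $\Tmix(T)$ as an explicit polynomial in the subtree-size sequence $(n_0, n_1, \ldots, n_d)$ along a fixed diametral path $v_0, v_1, \ldots, v_d$ of $T$.

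Next I would introduce a sequence of local moves, corresponding to the macros already present in the paper, each preserving $n$ and $d$ and strictly increasing $\Tmix$ unless it leaves $T$ unchanged:
\begin{enumerate}[leftmargin=*]
    \item $\caterpillarize$: replace any hanging non-leaf subtree at $v_i$ by an equivalent collection of pendant leaves at $v_i$, reducing to caterpillars.
    \item $\push$ and $\throw$: slide pendant leaves along the spine from interior $v_i$ toward $v_1$ or $v_{d-1}$, reducing to double brooms in $\double{n}{d}$.
    \item $\pairpush$: transfer leaves from the larger pendant cluster to the smaller until their sizes differ by at most one, singling out $\Dnd$.
\end{enumerate}
Each step is verified by substituting the perturbed subtree-size sequence into the formula from the reduction step and checking the sign of the resulting polynomial difference. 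Finally, with $T = \Dnd$, I would substitute $\ell = \lceil(n-d-1)/2\rceil$ and $r = \lfloor(n-d-1)/2\rfloor$ into the double-broom formula and simplify, splitting on the parity of $n-d$ to recover \eqref{eqn:broom-max-mixing-odd} and \eqref{eqn:broom-max-mixing-even}.

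The main obstacle will be the monotonicity under $\caterpillarize$. Flattening a subtree shifts the stationary distribution $\pi$, and the optimal halting state $u$ may move to a different leaf, so a naive hitting-time inequality cannot be applied vertex-wise. The cleanest route, I expect, is to rewrite $H(v,u)-H(\pi,u)$ for diametral endpoints $v,u$ as a sum over the edges of the diametral path whose coefficients depend only on the sizes of the two sides of each edge; then $\caterpillarize$ becomes a concrete perturbation of those side-sizes whose sign can be determined by direct algebraic computation. The $\push$, $\throw$, and $\pairpush$ steps then follow the same pattern but on caterpillars, where the perturbations are one-parameter and the sign checks reduce to elementary polynomial inequalities in $n$, $d$, and the pendant counts $\ell$, $r$.
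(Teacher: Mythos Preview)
Your overall architecture---local moves that strictly increase $\Tmix$ until you reach $\Dnd$---matches the paper's, but your first step contains a genuine error. The claim that the pair $(v,u)$ witnessing $\Tmix(T)$ can always be taken at the endpoints of a \emph{diametral} path is false. Take the tree on $13$ vertices built from a path $w_3 w_2 w_1 v_2 v_3 v_4 v_5$ of length~$6$, with one pendant leaf at $v_3$ and a star of five vertices (center $v_1$, four leaves, one of them called $v_0$) attached at~$v_2$. This tree has diameter $6$, realized between $w_3$ and $v_5$; but a direct computation with equation~\eqref{eqn:adjhtime} shows that the mutually pessimal pair achieving $\Tmix$ is $(v_0,v_5)$, at distance only~$5$. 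The heavy star pulls the pessimal vertex off the diametral path. Consequently you cannot express $\Tmix(T)$ as a polynomial in the subtree sizes along a fixed diametral path, and your $\caterpillarize$ step---flattening onto the diametral spine---has no monotonicity argument: the inequality you need compares $H^*(v_0,v_d)-H^*(\newpi,v_d)$ against $H(z,z')-H(\pi,z')$ for a pair $(z,z')$ that need not lie on that spine at all.

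The paper repairs this by caterpillarizing along the \emph{pessimal} path instead. That works because then $\Tmix(G)=H(v_0,\pi)$ for the pessimal endpoint $v_0$, so it suffices to show $\newH(v_0,\newpi)>H(v_0,\pi)$, which follows from $\newH(v_0,v_s)=H(v_0,v_s)$ together with $\newH(\newpi,v_s)<H(\pi,v_s)$. The price is that the resulting caterpillar may have diameter $s<d$; the paper closes the loop with a separate computation showing $\Tmix(D_{n,s})<\Tmix(D_{n,d})$ whenever $s<d$. A secondary gap: your $\push$/$\throw$ description is underspecified. Sliding a \emph{single} interior leaf toward the heavier end need not increase $\Tmix$; the paper instead moves leaves in opposing pairs $(i,i-1)\wedge(j,j+1)$, and when exactly one interior leaf survives (the ``near double broom'') it is thrown to the \emph{lighter} end under the hypothesis $\deg(v_1)\ge\deg(v_{d-1})$, which is what makes the sign come out right.
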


The organization of this paper is as follows. In Section \ref{sec:prelim}, we recall previous results about stopping rules and random walks on trees. In Section \ref{sec:double-broom}, we derive formulas for the mixing times of double brooms. Section \ref{sec:mixing-time} contains the proof of our main theorem. We conclude with some open problems in Section \ref{sec:conclusion}

%%%%%%%%%%%%%%%%%%%%%%%%
%
%  PRELIM
%
%%%%%%%%%%%%%%%%%%%%%%%%%

%%%%%%%%%%%%%%%%%%%%%%%%%%%%%%%%%%%%%%%%%%%%%%%%%%%%%%
% Preliminaries
%%%%%%%%%%%%%%%%%%%%%%%%%%%%%%%%%%%%%%%%%%%%%%%%%%%%%%
\section{Preliminaries}
\label{sec:prelim}

For an introduction to the  theory of exact stopping rules, see Lov\'asz and Winkler \cite{LW1995}. 
Herein, we explain  how to calculate the access times $H(\pi,v)$ and  $H(v,\pi)$, where $v$ is the singleton distribution on vertex $v$.    We then describe  the naive rule, which is an example of a stopping rule that   obtains exact samples from a target distribution.

Complementary to the hitting time, the \emph{return time} $\Ret(u)$ is the expected number of steps required for a random walk started at $u$ to first return to $u$. It is well-known (see Lov\'asz \cite{Lovasz1996}) that
\begin{equation}
\label{eqn:return-time}
\Ret(u) = \frac{2|E(G)|}{\deg(u)}.
\end{equation}

Turning to access times, 
when $\sigma=\pi$ and $\tau = v$ is a singleton, then the expected length of an optimal $(\pi,v)$-rule is
$$
H(\pi,v) = \sum_{u \in V} \pi_u H(u,v).
$$
Indeed, the stopping rule ``walk until you first reach $v$'' is optimal (no other rule could be faster). Meanwhile, the formula for the \emph{mixing time} 
 $H(v,\pi)$ is more subtle;  we start by defining the term \emph{pessimal}.

\begin{definition}
\label{def:pessimal}
A \emph{$v$-pessimal vertex} $\pess{v}$ is a vertex that  satisfies
$H(v',v)=\max_{w\in V}{H(w,v)}.$
\end{definition}
Note that pessimal vertices are not necessarily unique; for example, every leaf of the star $S_n$ is pessimal for the central vertex. We can now give the formula for $H(v,\pi)$  for any graph $G$.
\begin{theorem}[Theorem 5.2 and equation (8) in \cite{LW1995}]
\label{thm:mixing-time}
The expected length of an optimal mixing rule starting from vertex $v$ is 
\begin{equation}
\label{eqn:mixing-time-formula}
H(v, \pi) 
= H(v',v) - H(\pi,v)
= H(\pess{v}, v) - \sum_{u \in V} \pi_u H(u,v).
\end{equation}
\end{theorem}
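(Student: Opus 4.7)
My plan is to prove the formula $H(v,\pi) = H(\pess{v},v) - H(\pi,v)$ by first casting stopping rules as flows on $G$, deriving a sharp lower bound that simultaneously identifies the optimal ``halting state'', and then translating that halting state into the pessimal vertex via a reversibility identity.

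The first step is to encode any $(v,\pi)$-stopping rule $\Gamma$ by its exit frequencies $x_u = \mathbb{E}[\text{departures from } u \text{ under } \Gamma]$. These satisfy the flow conservation relation $\mathbf{1}\{u=v\} + \sum_w x_w P(w,u) = \pi_u + x_u$, and encode the expected length via $\mathbb{E}(\Gamma) = \sum_u x_u$. For any candidate vertex $w^*$, I would pair this linear system against the hitting-time potential $f(u) = H(u,w^*)$; since $f - Pf = 1$ off $w^*$ while $(Pf-f)(w^*) = \Ret(w^*) - 1 = 1/\pi_{w^*} - 1$ by \eqref{eqn:return-time}, the resulting inner product collapses to
$$\mathbb{E}(\Gamma) = H(v,w^*) - H(\pi,w^*) + \frac{x_{w^*}}{\pi_{w^*}}.$$
This yields the universal inequality $\mathbb{E}(\Gamma) \geq H(v,w^*) - H(\pi,w^*)$, with equality exactly when $w^*$ is a \emph{halting state} of $\Gamma$ (i.e., $x_{w^*}=0$). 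Maximizing over $w^*$ produces the lower bound $H(v,\pi) \geq \max_{w^*}[H(v,w^*) - H(\pi,w^*)]$.

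The matching upper bound requires exhibiting a rule that attains the maximum: halt deterministically on first arrival at the maximizing vertex $w^*$, and halt with carefully tuned probabilities elsewhere so that the halting distribution is $\pi$. These probabilities come from solving the flow system with $x_{w^*}=0$; the resulting system has a nonnegative solution precisely when $w^*$ attains the outer maximum. This is the most delicate step, and the plan is to invoke LP duality for stopping rules (Theorem 5.2 of \cite{LW1995}) rather than rebuilding the construction from scratch.

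To finish, I would identify the maximizing $w^*$ with the pessimal vertex $\pess{v}$. The crucial ingredient is the reversibility identity $H(u,v) - H(v,u) = H(\pi,v) - H(\pi,u)$, which follows from the random target lemma $\sum_z \pi_z H(u,z) = \sum_z \pi_z H(v,z)$ together with reversibility of the walk. Substituting this into the lower bound transforms $H(v,w) - H(\pi,w)$ into $H(w,v) - H(\pi,v)$, whose maximum over $w$ is attained at $w = \pess{v}$ by Definition~\ref{def:pessimal} and equals $H(\pess{v},v) - H(\pi,v)$, as claimed. The principal obstacle is the upper-bound construction; the lower bound and the final identification are short manipulations, but producing an optimal rule whose halting state realizes the required hitting-time difference is the core content of the theorem.
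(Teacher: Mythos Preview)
The paper does not prove this theorem at all: it is stated as a citation to Theorem~5.2 and equation~(8) of \cite{LW1995}, with no accompanying argument. So there is no ``paper's own proof'' to compare your proposal against.

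That said, your outline is essentially the original Lov\'asz--Winkler argument, and the lower-bound half is correctly sketched: the exit-frequency identity $\mathbb{E}(\Gamma) = H(v,w^*) - H(\pi,w^*) + x_{w^*}/\pi_{w^*}$ is exactly how one shows $H(v,\pi) \geq \max_{w}[H(v,w) - H(\pi,w)]$, and the reversibility identity $H(v,w) - H(w,v) = H(\pi,w) - H(\pi,v)$ is the right tool for converting the maximizer into the pessimal vertex $\pess{v}$.

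There is one genuine gap. For the upper bound you write that you will ``invoke LP duality for stopping rules (Theorem~5.2 of \cite{LW1995}) rather than rebuilding the construction from scratch.'' But Theorem~5.2 of \cite{LW1995} is precisely the result being quoted here, so this step is circular as written. If you intend your write-up to be a self-contained proof, you must actually construct a rule with a halting state at the maximizing $w^*$: this means showing that the system $(I - P^T)x = \mathbf{1}_v - \pi$ with the boundary condition $x_{w^*} = 0$ has a \emph{nonnegative} solution, and that the associated stopping probabilities lie in $[0,1]$. That nonnegativity is the substantive content of the theorem and is what the ``filling rule'' or ``threshold rule'' constructions in \cite{LW1995} establish. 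If instead you are content to cite \cite{LW1995} for this existence step, then your proposal reduces to the same citation the paper already makes, and there is nothing further to prove.
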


\begin{theorem}[Corollary 4.2 in \cite{LW1998} applied to an undirected graph] 
\label{thm:mixing-duality}
If $z \in V$ achieves 
$$
\Tmix(G) = H(z,\pi) = H(z',z) - H(\pi, z)
$$ 
on an undirected graph, then so does the $z$-pessimal vertex $z'$. Moreover, $z$ is a $z'$-pessimal vertex, that is $H(z,z') = \max_{v \in V} H(v,z')$, and
$$
\Tmix(G) = H(z',\pi) = H(z,z') - H(\pi, z').
$$
\end{theorem}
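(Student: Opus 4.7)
The plan is to reduce this duality to Theorem~\ref{thm:mixing-time} together with a single reversibility identity for undirected graphs. The key lemma to establish first is the symmetry
\[
H(u,v) - H(\pi,v) \;=\; H(v,u) - H(\pi,u) \qquad \text{for all } u,v \in V.
\]
One clean derivation starts from the classical cycle-reversibility identity
\[
H(u,v) + H(v,w) + H(w,u) \;=\; H(u,w) + H(w,v) + H(v,u),
\]
averages both sides over $w$ against $\pi$, and invokes the random target lemma (the quantity $\sum_w \pi_w H(x,w)$ is independent of $x$) to cancel the two averaged terms. Alternatively, one can use Tetali's effective-resistance formula $H(u,v) = |E|\,R(u,v) + \phi(v) - \phi(u)$, where $\phi(v) = |E| \sum_w \pi_w R(v,w)$, and compute directly that $H(u,v) - H(\pi,v) = |E|\,R(u,v) - \phi(u) - \phi(v) + \sum_x \pi_x \phi(x)$, which is manifestly symmetric in $u$ and $v$.

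With this symmetry in hand, apply it with $u=z'$ and $v=z$ to rewrite
\[
\Tmix(G) \;=\; H(z',z) - H(\pi,z) \;=\; H(z,z') - H(\pi,z'),
\]
using the hypothesis on $z$ for the first equality. Then compare against the expression for $H(z',\pi)$ coming from Theorem~\ref{thm:mixing-time} applied at $z'$:
\[
H(z',\pi) \;=\; \max_{u \in V} H(u,z') - H(\pi,z') \;\geq\; H(z,z') - H(\pi,z') \;=\; \Tmix(G).
\]
Since $H(z',\pi) \leq \Tmix(G)$ by the definition of the mixing time, every step above must be an equality. The only slack in the chain was the inequality $\max_u H(u,z') \geq H(z,z')$, so forcing equality there reads off both conclusions at once: $z$ attains $\max_{u} H(u,z')$ (so $z$ is a $z'$-pessimal vertex), and $H(z',\pi) = \Tmix(G)$ (so $z'$ also realises the mixing time). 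The final displayed equation $\Tmix(G) = H(z,z') - H(\pi,z')$ in the statement is then just Theorem~\ref{thm:mixing-time} applied at $z'$ with pessimal vertex $z$.

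The main obstacle is the reversibility identity itself; the remainder is bookkeeping around Theorem~\ref{thm:mixing-time}. Either route to that identity---cycle reversibility plus the random target lemma, or Tetali's effective-resistance formula---uses reversibility in an essential way, which is where the undirected-graph hypothesis actually enters. An alternative approach, more in the spirit of Lov\'asz--Winkler, would be to argue via halting states: $z'$ serves as a halting state for an optimal $(z,\pi)$-stopping rule, and by running this rule in reverse (using reversibility of the chain on stationarity) one obtains an optimal $(z',\pi)$-stopping rule whose halting state is $z$; this yields both conclusions directly, but pushing the reversal argument through rigorously is at least as much work as the identity above.
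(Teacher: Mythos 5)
The paper does not prove this statement; it cites Corollary~4.2 of Lov\'asz and Winkler directly, so there is no internal proof to compare against, and your proposal should be judged on its own. Your argument is correct and gives a clean, self-contained derivation. The key symmetry
$H(u,v)-H(\pi,v)=H(v,u)-H(\pi,u)$
does follow exactly as you say from the cycle-reversing identity $H(u,v)+H(v,w)+H(w,u)=H(u,w)+H(w,v)+H(v,u)$ for reversible chains by $\pi$-averaging over $w$ and cancelling via the random target lemma; the Tetali effective-resistance route gives the same thing, and either way reversibility (i.e.\ the undirected-graph hypothesis) is used essentially. With that identity in hand, setting $(u,v)=(z',z)$, invoking the hypothesis $\Tmix(G)=H(z',z)-H(\pi,z)$, comparing against $H(z',\pi)=\max_{u}H(u,z')-H(\pi,z')$ from Theorem~\ref{thm:mixing-time}, and then sandwiching with $H(z',\pi)\leq\Tmix(G)$ correctly forces equality in the single slack inequality $\max_u H(u,z')\geq H(z,z')$, which simultaneously shows that $z$ is $z'$-pessimal and that $H(z',\pi)=\Tmix(G)$. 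The final display is then just Theorem~\ref{thm:mixing-time} at $z'$ with pessimal vertex $z$, as you note. The alternative route you sketch via running an optimal halting-state rule in reverse is closer to how Lov\'asz and Winkler handle the general (possibly non-reversible) duality, and as you observe is harder to make rigorous; your identity-based shortcut is a sensible simplification for undirected graphs.
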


If we know all of the vertex-to-vertex hitting times, then we can use  Theorem \ref{thm:mixing-time} to calculate the mixing time $H(v, \pi)$ for each  starting vertex $v \in V$. Theorem \ref{thm:mixing-duality} will be useful for technical reasons, since it guarantees that there is aways a pessimal pair of vertices, both of which achieve $\Tmix(G)$ for an undirected graph.

In general, we can determine every hitting time $H(u,v)$ by solving a system of linear equations. However, the acyclic structure of trees leads to  an explicit formula in terms of the distances and degrees of the graph. We follow the notation of  \cite{beveridge2009,BW2013}, but equivalent variations on this tree formula abound in the literature (c.f.~\cite{DTW2003,GW2013},  and Chapter 5 of \cite{aldous+fill}).  From here forward, we assume that $G=(V(G),E(G))$ is a tree on $n \geq 4$ vertices.

Suppose that $(u,v) \in E(G)$ is an edge in the tree.   Removing this edge breaks $G$ into two disjoint trees $G_1$ and $G_2$, where $u \in V(G_1)$ and $v \in V(G_2)$. We define $V_{u:v} = V(G_1)$ and $V_{v:u} = V(G_2)$. We think of $V_{u:v}$ as the set of vertices that are closer to $u$ than to $v$. For these adjacent vertices $u$ and $v$, let $F$ denote the induced tree on $V_{u:v} \cup \{   v \}$. We have
\begin{equation} 
\label{eqn:adjhtime}
		H(u,v)= \Ret_{F} (v) - 1 = \sum_{w\in V_{u:v}}{\deg(w)} = 2|V_{u:v}|-1,
\end{equation}
where the first equality holds by equation \eqref{eqn:return-time}.
Equation \eqref{eqn:adjhtime} encodes a very useful property of trees: the hitting time from a vertex $u$ to an adjacent vertex $v$ only depends on $|V_{u:v}|$, independent of the particular structure of the tree.
This equation can be used to  reveal that when  $G=P_n$ is the path ($v_0,v_1,\dots,v_{n-1}$), the hitting times are
\begin{equation} 
\label{eq:htimepath2}
H(v_i,v_j) =
j^2-i^2 \qquad \mbox{if } i \leq j.
\end{equation}

This formula is calculated by repeatedly using equation \eqref{eqn:adjhtime} to evaluate
$$H(v_i, v_j) = H(v_i, v_{i+1}) + H(v_{i+1}, v_{i+2}) + \cdots + H(v_{j-1}, v_{j}).$$ 
A similar argument produces a formula for the hitting times of an arbitrary tree, but first we need some additional notation. Let  $u,v,w \in V$. Recall that $d(u,v)$ is the distance between these two vertices, and and define
$$
%\label{eqn:shared-distance}
\ell(u,v;w)=\frac{1}{2}(d(u,w)+d(v,w)-d(u,v))
$$
to be the length of the intersection of the $(u,w)$-path and the $(v,w)$-path. 

\begin{lemma}[\cite{beveridge2009}]
\label{lemma:hitting-time}
For any pair of vertices $u,v$, we have
\begin{equation} 
\label{eqn:hitting-time}
H(u,v)=\sum_{w\in V}{\ell(u,w;v)\deg(w)}.
\end{equation}
\end{lemma}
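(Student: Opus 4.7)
The plan is to reduce the general formula to the adjacent hitting time identity~\eqref{eqn:adjhtime} via a telescoping sum along the unique $u$-to-$v$ path in the tree, mimicking the derivation of equation~\eqref{eq:htimepath2} for the path graph. Let $u = u_0, u_1, \ldots, u_k = v$ denote this path, where $k = d(u,v)$. Because the tree is acyclic, any random walk that eventually hits $v$ must visit $u_1, u_2, \ldots, u_{k-1}$ in order; the strong Markov property applied at each first-passage time yields
$$
H(u,v) = \sum_{i=0}^{k-1} H(u_i, u_{i+1}) = \sum_{i=0}^{k-1} \sum_{w \in V_{u_i:u_{i+1}}} \deg(w) = \sum_{w \in V} N(w) \deg(w),
$$
where $N(w)$ counts the indices $i \in \{0,1,\ldots,k-1\}$ for which $w \in V_{u_i:u_{i+1}}$.

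Next I would evaluate $N(w)$ combinatorially. For each $w \in V$, let $u_j$ be the unique vertex on the $u$-to-$v$ path closest to $w$; existence and uniqueness again follow from acyclicity. Removing the edge $(u_i, u_{i+1})$ from the tree separates $u_0, \ldots, u_i$ from $u_{i+1}, \ldots, u_k$, and $w$ lies in the component containing $u_j$. Hence $w \in V_{u_i:u_{i+1}}$ exactly when $j \leq i$, giving $N(w) = k - j$.

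It remains to verify that $N(w) = \ell(u, w; v)$. Since $d(u,w) = j + d(u_j, w)$, $d(v,w) = (k-j) + d(u_j, w)$, and $d(u,v) = k$, the definition of $\ell$ gives
$$
\ell(u,w;v) = \tfrac{1}{2}\bigl(d(u,v) + d(v,w) - d(u,w)\bigr) = \tfrac{1}{2}\bigl(k + (k-j) - j\bigr) = k - j,
$$
which completes the identification. The one delicate step is justifying the telescoping identity for $H(u,v)$, which I would argue carefully via the strong Markov property at the first-passage times to each $u_i$. I do not anticipate any real obstacle, since acyclicity forces every walk from $u$ to $v$ to traverse the intermediate vertices of the path in order, reducing the entire argument to bookkeeping.
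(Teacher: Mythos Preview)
Your argument is correct and is precisely the telescoping-along-the-path approach the paper alludes to: the paper does not include its own proof of this lemma but cites \cite{beveridge2009}, and the surrounding text (``A similar argument produces a formula for the hitting times of an arbitrary tree\ldots'') points to exactly the derivation you carry out. Your computation of $\ell(u,w;v)=k-j$ and the counting $N(w)=k-j$ are both right, so nothing further is needed.
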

Theorem \ref{thm:mixing-time} and Lemma \ref{lemma:hitting-time} will be foundational for our proof of Theorem \ref{thm:broom-max-mixing}. Together, they provide a simple way to calculate hitting times and mixing times for trees.

\subsection{The naive rule}

What does a stopping rule look like in practice?
Intriguingly, we typically have \emph{multiple} optimal mixing rules from $v$ to $\pi$.
  Lov\'asz and Winkler  \cite{LW1995a} describe four distinct optimal rules,  each of which achieves the minimum expected length of an $(v,\pi)$-rule. We choose to describe the \emph{naive rule} because it is the simplest stopping rule. (However, the naive rule it is rarely optimal: usually, there are other rules with smaller expected lengths.) 
To avoid going deeper into the theory of stopping rules, we simply mention one useful characterization of  optimal rules.

\begin{theorem}[Theorem 5.1 in \cite{LW1995}]
A stopping rule is optimal if and only if there is at least one vertex that is a \emph{halting state}, meaning that  if the random walk reaches this vertex, then the optimal stopping rule \emph{always} stops the walk.    
\end{theorem}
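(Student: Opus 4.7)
The plan is to prove the characterization via the dual linear-programming formulation of optimal stopping rules based on \emph{exit frequencies}. Given a $(\sigma,\tau)$-stopping rule $\Gamma$, let $x_v(\Gamma)$ denote the expected number of times the walk visits $v$ and subsequently takes a further step before the rule halts. Writing $P$ for the random-walk transition matrix and conserving probability at each vertex yields
$$
\sigma_v + \sum_{u \in V} x_u(\Gamma)\, P(u,v) \;=\; \tau_v + x_v(\Gamma),
$$
which in matrix form reads $(I - P^{\top})\,x = \sigma - \tau$. The expected length of the rule is $\mathbb{E}(\Gamma) = \sum_{v \in V} x_v(\Gamma)$, so the access time $H(\sigma,\tau)$ equals the minimum of $\mathbf{1}^{\top} x$ subject to this linear system together with the nonnegativity constraints $x \geq 0$.

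Next I would analyze the solution space of the conservation equation. Because $P$ is an irreducible stochastic matrix on the finite vertex set $V$, $\ker(I - P^{\top})$ is the one-dimensional subspace spanned by the stationary distribution $\pi$. Hence every feasible $x$ has the form $x^{(0)} + c\pi$ for a particular solution $x^{(0)}$ and some scalar $c \in \mathbb{R}$, and
$$
\mathbf{1}^{\top} x \;=\; \mathbf{1}^{\top} x^{(0)} + c \sum_{v \in V} \pi_v \;=\; \mathbf{1}^{\top} x^{(0)} + c.
$$
Minimizing the expected length therefore means pushing $c$ as negative as possible while maintaining $x \geq 0$. Since $\pi_v > 0$ for every $v$, the nonnegativity constraint first becomes tight at some coordinate $x_h$; this corresponds to a vertex we never exit, that is, a \emph{halting state}.

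With this framework in place, both directions of the theorem follow. For the ($\Leftarrow$) direction, if $\Gamma$ has a halting state $h$ then $x_h(\Gamma) = 0$; any further decrease of $c$ would drive $x_h$ negative, so $\Gamma$ already realizes the minimum of $\mathbf{1}^{\top} x$ and is optimal. For the ($\Rightarrow$) direction, if $\Gamma$ has no halting state, then $x_v(\Gamma) > 0$ for every $v$, and for small enough $\varepsilon > 0$ the vector $x(\Gamma) - \varepsilon\pi$ is still nonnegative and satisfies the same conservation law. Realizing this shifted vector as a genuine stopping rule — for instance by supplementing $\Gamma$ with an extra probability $\varepsilon \pi_v / x_v(\Gamma)$ of halting at each visit to $v$ before the original would have exited — produces a strictly shorter $(\sigma,\tau)$-rule, contradicting optimality.

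The subtle step, and the one I expect to require the most care, is the last one: producing a bona fide stopping rule from an arbitrary nonnegative solution of the conservation equation. This is essentially a flow-to-rule construction, and once it is verified that stopping rules correspond one-to-one with nonnegative exit-frequency vectors satisfying $(I - P^{\top})x = \sigma - \tau$ (a standard argument from \cite{LW1995}), the linear-programming analysis above closes the theorem in both directions.
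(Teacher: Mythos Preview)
The paper does not supply a proof of this theorem; it is quoted from \cite{LW1995} as a black box. Your outline follows the standard exit-frequency argument from that source, and the overall structure is sound: the conservation identity $(I-P^{\top})x=\sigma-\tau$ together with $\ker(I-P^{\top})=\mathbb{R}\pi$ reduces the access time to minimizing $\mathbf{1}^{\top}x$ over a one-parameter affine family of nonnegative vectors, and the minimum is achieved precisely when some coordinate hits zero.

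One genuine gap, however: the explicit shortcut you propose for the $(\Rightarrow)$ direction---adding an extra halting probability $\varepsilon\pi_v/x_v(\Gamma)$ at each visit to $v$---does not yield exit frequencies $x(\Gamma)-\varepsilon\pi$. Early halting at $v$ suppresses all subsequent transitions of the walk, so the effect on the vector $x$ is not coordinatewise subtraction of a multiple of $\pi$; the perturbation propagates nonlinearly through the remaining trajectory. You correctly flag the realization of an arbitrary nonnegative solution of the conservation equation as the delicate step and defer to \cite{LW1995}; you should lean entirely on the filling-rule (or threshold-rule) construction there rather than this ad hoc modification, since the latter does not do what you claim.
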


We start by describing the naive rule from $v_0$ to $\pi$ on the  path $P_4$ on vertices $(v_0, v_1, v_2, v_3)$. In the naive rule, we choose our target vertex up front (with the appropriate probability), and walk to that pre-chosen vertex. Our target probabilities are $(1/6,1/3,1/3,1/6)$ so the expected length of this naive stopping rule is
$$
\frac{1}{6} H(v_0,v_0) + \frac{1}{3} H(v_0,v_1) + \frac{1}{3} H(v_0,v_2) + \frac{1}{6} H(v_0,v_3) = 0 + \frac{1}{3} + \frac{4}{3} = \frac{9}{6} = \frac{19}{6}.
$$
This naive rule is optimal because $v_3$ is a halting state. Indeed, we get the same $19/6$ value  from formula \eqref{eqn:mixing-time-formula} via a different calculation.

Next, let's consider a naive rule that is not optimal. This time, we follow the naive rule from leaf $w_1$ to $\pi$ on the star graph $S_4 = K_{1,3}$ on vertices $\{ w_0,w_1, w_2, w_3\}$, where $w_0$ is the central vertex. Our target probabilities are $(1/2, 1/6, 1/6, 1/6)$ and the expected length of the rule is
$$
\frac{1}{2} H(w_1,w_0) + \frac{1}{6} H(w_1,w_1) + \frac{1}{6} H(w_1,w_2) + \frac{1}{6} H(w_1,w_3)
= \frac{1}{2} + 0 + \frac{6}{6} +   \frac{6}{6} = \frac{5}{2}.
$$
This naive rule is not optimal because it does not have a halting state. For comparison, here is an optimal $(w_1,\pi)$-stopping rule whose expected length is $3/2$: take one step, then take a second step with probability $1/2$.

%%%%%%%%%%%%%%%%%%%%%%%%
%
%  DOUBLE BROOM
%
%%%%%%%%%%%%%%%%%%%%%%%%%

\section{The Mixing Time for a Double Broom}
\label{sec:double-broom}

Let $G \in \double{n}{d}$ be a double broom with geodesic path $v_0, v_1, \ldots, v_d$.
We derive formulas for the hitting times $H(u,v)$, the access time $H(\pi, v_d)$ and the mixing time $\Tmix(G) = \max_v H(v,\pi)$. Having the mixing time formulas at our disposal will be  essential for the inductive proof of our main result in Section \ref{sec:end}.

\begin{lemma}
\label{lemma:double-broom-hitting-times}
Let $G \in \double{n}{d}$ be a double broom with $\ell$ left leaves, $r$ right leaves, and  geodesic $v_0, v_1, \ldots, v_d$, so that $\ell+r=n-d+1$.
We have
\begin{align*}
H(v_0, v_k) &= k^2 + (\ell-1)(k-1), \quad \mbox{for }  1 \leq k < d, \\
H(v_0, v_d)  &= d^2 + 2(\ell-1) (d-1) + 2(r-1), \\
H(v_k,v_d)  &= d^2  - k^2 + 2(\ell -1)(d-k) + 2(r-1), \quad \mbox{for }   1 \leq k \leq d-1.
\end{align*}
\end{lemma}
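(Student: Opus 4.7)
The plan is to rely on the tree adjacency formula from equation~\eqref{eqn:adjhtime}, namely $H(u,v) = 2|V_{u:v}| - 1$ for every edge $(u,v)$, and to telescope along the geodesic $v_0, v_1, \ldots, v_d$. Since $v_k$ lies on the unique $(v_0, v_d)$-path in the tree, one has $H(v_0, v_k) = \sum_{i=1}^{k} H(v_{i-1}, v_i)$ for each $k$, and more generally $H(v_0, v_d) = H(v_0, v_k) + H(v_k, v_d)$. These identities reduce the problem to computing one-step hitting times along the geodesic and then summing.

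First I would tabulate the sets $V_{v_{i-1}:v_i}$ corresponding to the interior edges. For $i = 1$ the set is $\{v_0\}$. For $2 \leq i \leq d-1$ it consists of the interior geodesic vertices $v_1, \ldots, v_{i-1}$ together with all $\ell$ leaves hanging off $v_1$, so $|V_{v_{i-1}:v_i}| = i - 1 + \ell$. For $i = d$ the set contains every vertex other than $v_d$, i.e.\ $n - 1$ vertices. Feeding these into \eqref{eqn:adjhtime} gives $H(v_0, v_1) = 1$, $H(v_{i-1}, v_i) = 2(i-1+\ell) - 1$ for $2 \leq i \leq d-1$, and $H(v_{d-1}, v_d) = 2n - 3$.

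Next I would sum these pieces. For $1 \leq k < d$, the closed form for $H(v_0, v_k)$ drops out of a short arithmetic series $\sum_{i=2}^k (2(i-1+\ell) - 1)$, collapsed by the standard formula for $\sum i$. For $H(v_0, v_d)$, I add the terminal contribution $2n - 3$ to $H(v_0, v_{d-1})$ and substitute $n = \ell + r + d - 1$ (the vertex count $|V| = (d - 1) + \ell + r$ for a double broom, consistent with $\ell + r = n - d + 1$) to replace $n$ by $\ell, r, d$. The final formula for $H(v_k, v_d)$ is immediate from $H(v_k, v_d) = H(v_0, v_d) - H(v_0, v_k)$, followed by term-by-term simplification that cancels the $k = 0$ contributions.

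The work is essentially mechanical; the only subtle point is the asymmetric treatment of the two ends of the geodesic. The final edge $(v_{d-1}, v_d)$ cuts off an entire $(n-1)$-vertex side, whereas each interior edge $(v_{i-1}, v_i)$ separates only the $\ell$ leaves at $v_1$ and a prefix of the interior path. Correctly attributing the contribution of the $r - 1$ extra leaves at $v_{d-1}$ (which are only swept in on the last hop) is what produces the extra $2(r-1)$ term that appears in the second and third formulas but is absent from the first. A minor housekeeping point is that the $k = 1$ and $k = 2$ base cases need to be checked directly, since the index range $3 \leq i \leq d-1$ in the general sum can be empty.
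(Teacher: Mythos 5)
Your approach is sound and takes a genuinely different route from the paper: you telescope the single-edge hitting times $H(v_{i-1},v_i)=2|V_{v_{i-1}:v_i}|-1$ from equation \eqref{eqn:adjhtime}, whereas the paper's proof applies the global formula $H(u,v)=\sum_{w\in V}\ell(u,w;v)\deg(w)$ of equation \eqref{eqn:hitting-time} directly. Both are legitimate ways to get at tree hitting times; your version makes it especially transparent why the $2(r-1)$ term only enters on the final hop $(v_{d-1},v_d)$, which sweeps in the entire $(n-1)$-vertex side at once.

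However, you should actually carry out the arithmetic that you wave past as ``drops out,'' because doing so reveals a discrepancy with the statement. The telescope gives
\[
H(v_0,v_k)\;=\;1+\sum_{i=2}^{k}\bigl(2(i-1+\ell)-1\bigr)\;=\;k^2+2(\ell-1)(k-1),
\]
with a coefficient of $2$ on the $(\ell-1)(k-1)$ term, not the $k^2+(\ell-1)(k-1)$ printed in the lemma. The two cannot both be correct: combining the lemma's own second and third formulas via the identity $H(v_0,v_k)=H(v_0,v_d)-H(v_k,v_d)$ likewise yields $k^2+2(\ell-1)(k-1)$, so the first formula as printed is internally inconsistent with the other two and is missing a factor of $2$. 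A concrete check at $k=2$ settles it: $H(v_0,v_1)=1$ and $H(v_1,v_2)=2\ell+1$, hence $H(v_0,v_2)=2\ell+2=4+2(\ell-1)$, not $3+\ell$. This typo is benign for the remainder of the paper, since Lemma \ref{lemma:double-broom-pi-access-times} and Lemma \ref{lemma:double-broom-mixing-time} only invoke $H(v_k,v_d)$ and $H(v_0,v_d)$, which are stated correctly; but if you present your telescoping proof you should flag the corrected constant rather than leave a reader to believe your arithmetic is at fault.
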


\begin{proof}
The first two equations are calculated using equation \eqref{eqn:hitting-time}. The third equation follows directly because $H(v_k,v_d)  = H(v_0, v_d) - H(v_0,v_k).$ 
%\qed  
\end{proof}

\begin{lemma}
\label{lemma:double-broom-pi-access-times}
Let $G \in \double{n}{d}$ be a double broom with $\ell$ left leaves and $r$ right leaves. Then
\begin{align*}
H(\pi,v_d) &= \frac{1}{6 (n-1)} \bigg(
4 d^3+12 d^2 (\ell-1)+d (12 \ell(\ell-2) +24 r-13) \\
& \qquad
+3 \left(-4 \ell^2+\ell (8 r-3)+r (4
   r-19)+14\right) \bigg).
\end{align*}
\end{lemma}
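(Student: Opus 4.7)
The plan is to compute $H(\pi, v_d)$ directly from the identity
\[
H(\pi, v_d) = \sum_{u \in V} \pi_u\, H(u, v_d) = \frac{1}{2(n-1)} \sum_{u \in V} \deg(u)\, H(u, v_d),
\]
using $\pi_u = \deg(u)/(2|E|)$ with $|E| = n-1$, together with the hitting time formulas from Lemma \ref{lemma:double-broom-hitting-times}.

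I would partition $V(G)$ into three groups and tally each contribution to $\sum_u \deg(u) H(u, v_d)$ separately. First, for the $\ell$ pendant leaves at $v_1$ (one of which is $v_0$), the symmetry of leaves sharing a common neighbor gives $H(u, v_d) = H(v_0, v_d)$ for each, so this group contributes $\ell \cdot H(v_0, v_d)$. Second, the path vertices $v_1, \ldots, v_{d-1}$ have degrees $\ell+1, 2, 2, \ldots, 2, r+1$, and their hitting times $H(v_k, v_d)$ are read off from Lemma \ref{lemma:double-broom-hitting-times}. Third, for the $r$ pendant leaves at $v_{d-1}$: the vertex $v_d$ contributes $0$, while each of the remaining $r-1$ leaves $u$ satisfies $H(u, v_d) = 1 + H(v_{d-1}, v_d) = 2n - 2$, by additivity of hitting times along the unique $(u, v_d)$-path in the tree and by \eqref{eqn:adjhtime} applied to the edge $v_{d-1} v_d$; this group contributes $(r-1)(2n-2)$.

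For the middle group I would write
\[
\sum_{k=1}^{d-1} \deg(v_k)\, H(v_k, v_d) = 2 \sum_{k=1}^{d-1} H(v_k, v_d) + (\ell-1) H(v_1, v_d) + (r-1) H(v_{d-1}, v_d),
\]
pulling out the excess degree at the two endpoints. Substituting $H(v_k, v_d) = d^2 - k^2 + 2(\ell-1)(d-k) + 2(r-1)$, the remaining work is the elementary sums $\sum_{k=1}^{d-1} k^2 = (d-1)d(2d-1)/6$ and $\sum_{k=1}^{d-1}(d-k) = d(d-1)/2$. The factor $1/6$ from the sum of squares, combined with the $1/(2(n-1))$ prefactor, accounts for the $1/(6(n-1))$ in the target expression.

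I expect the only obstacle to be the clerical bookkeeping of the algebra: organizing the expansion and collection of the $\ell^2$, $r^2$, $\ell r$, $d\ell$, $dr$, and $d^2 \ell$ terms so that they assemble into exactly the stated numerator polynomial. Beyond Lemma \ref{lemma:double-broom-hitting-times} no further idea is required, so the proof amounts to a direct but lengthy verification.
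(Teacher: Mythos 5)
Your proposal is correct and follows essentially the same route as the paper: write $H(\pi,v_d)=\frac{1}{2(n-1)}\sum_u \deg(u)H(u,v_d)$, partition $V$ into the left leaves, the spine $v_1,\dots,v_{d-1}$, and the right leaves, substitute the hitting times from Lemma~\ref{lemma:double-broom-hitting-times}, and collect terms. The only difference is that you carry out the intermediate simplifications (pulling out excess endpoint degrees, summing $k^2$ and $d-k$) by hand, whereas the paper records the same decomposition and then delegates the final polynomial bookkeeping to computer algebra.
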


\begin{proof}
First, we note that $n=(d+1)+(\ell-1)+(r-1)=d+\ell+r-1$. We have
\begin{align*}
H(\pi,v_d) &= \sum_{v \in V} \pi_v H(v, v_d)\\
&= \frac{1}{2(n-1)}  \bigg( \ell \, H(v_0, v_d)
+ (\ell+1) H(v_1, v_d)
+ \sum_{k=2}^{d-2} 2 H(v_k, v_d) \\
&\qquad 
+ (r+1) H(v_{d-1}, v_d)
+ (r-1) (1 + H(v_{d-1}, v_d))
\bigg).
\end{align*}
The proof follows from the hitting time formulas in Lemma \ref{lemma:double-broom-hitting-times}. The calculations can be checked using a mathematical software system, such as  Sage or Mathematica.
%\qed  
\end{proof}

\begin{lemma}
\label{lemma:double-broom-mixing-time}
Let $G \in \double{n}{d}$ be a double broom with $\ell$ left leaves and $r$ right leaves. Then
\begin{align*}
\quad \Tmix(G) 
&= \frac{1}{6 (n-1)} 
\bigg( 2 d^3+6 d^2 (\ell+r-2)+d (12 \ell (r-2)-24 r+37) \\
& \qquad +\ell (33-24 r)+33 r-42 \bigg). 
\end{align*}
\end{lemma}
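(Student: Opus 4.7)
The plan is to apply Theorem \ref{thm:mixing-time} at the vertex $v = v_d$, which yields $\Tmix(G) = H(v_d, \pi) = H(v_0, v_d) - H(\pi, v_d)$, and then substitute the closed forms from Lemmas \ref{lemma:double-broom-hitting-times} and \ref{lemma:double-broom-pi-access-times}. Two intermediate facts justify writing $\Tmix(G)$ in this form: (a) $v_0$ is $v_d$-pessimal, and (b) $v_d$ maximizes $H(v, \pi)$ over $v \in V(G)$.

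For (a), Lemma \ref{lemma:double-broom-hitting-times} gives $H(v_k, v_d) = H(v_0, v_d) - k^2 - 2(\ell - 1)k < H(v_0, v_d)$ for $1 \leq k \leq d-1$, while every right-broom leaf $w \ne v_d$ satisfies $H(w, v_d) = 1 + H(v_{d-1}, v_d) = 2n - 3$ via equation \eqref{eqn:adjhtime}, and a short computation shows $H(v_0, v_d) - (2n - 3) = (d-2)(d - 2 + 2\ell) > 0$ for $d \geq 3$. The other left-broom leaves tie $v_0$ via the leaf-swap automorphism. By the symmetric argument on the opposite side, $v_d$ is $v_0$-pessimal.

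For (b), Theorem \ref{thm:mixing-duality} applied to the mutually pessimal pair $(v_0, v_d)$ guarantees $H(v_0, \pi) = H(v_d, \pi)$, so both diameter endpoints attain the same value and by symmetry so does every broom leaf. It remains to exclude the interior geodesic vertices $v_k$, $1 \leq k \leq d-1$. For each such $v_k$, I would identify its pessimal partner (whichever of $v_0, v_d$ is farther in hitting time, per Lemma \ref{lemma:double-broom-hitting-times}) and apply \eqref{eqn:mixing-time-formula}; the resulting inequality $H(v_k, \pi) < H(v_d, \pi)$ is elementary but bookkeeping-heavy.

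The remaining step is purely algebraic: write $6(n-1)\,\Tmix(G)$ as a single polynomial in $d, \ell, r$, using $n - 1 = d + \ell + r - 2$ to eliminate $n$, and collect terms. Following the authors' remark for Lemma \ref{lemma:double-broom-pi-access-times}, this expansion is most reliably done in a computer algebra system such as Sage or Mathematica. The main obstacle is the case analysis in (b), since the pessimal partner of an interior $v_k$ depends on the relative sizes of $\ell$ and $r$ and flips somewhere along the geodesic; fortunately, in each case the resulting inequality reduces to comparing two explicit polynomials provided by Lemmas \ref{lemma:double-broom-hitting-times} and \ref{lemma:double-broom-pi-access-times}.
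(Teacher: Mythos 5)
Your approach is essentially the paper's: use Theorem \ref{thm:mixing-duality} to single out $(v_0, v_d)$ as the mutually pessimal pair achieving the mixing time, apply equation \eqref{eqn:mixing-time-formula} to write $\Tmix(G) = H(v_0, v_d) - H(\pi, v_d)$, and substitute the closed forms from Lemmas \ref{lemma:double-broom-hitting-times} and \ref{lemma:double-broom-pi-access-times}, with a CAS handling the final algebraic collection --- and the paper's own justification that $(v_0,v_d)$ is the right pair is no more detailed than your sketch (``the only sensible choice''). One small caution: in part (b) you invoke Theorem \ref{thm:mixing-duality} in the converse direction, deducing $H(v_0,\pi)=H(v_d,\pi)$ merely from $(v_0,v_d)$ being mutually pessimal, whereas the theorem's hypothesis is that one of them already achieves $\Tmix(G)$; the conclusion is still true (it follows from the reversibility identity $H(u,v)-H(v,u)=H(\pi,v)-H(\pi,u)$ combined with \eqref{eqn:mixing-time-formula}), but the cleaner route --- the one the paper takes --- is to start from the guaranteed existence of some mutually pessimal pair achieving $\Tmix$, show that pair must consist of one left and one right leaf, and then note all such pairs are automorphism-equivalent.
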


\begin{proof} Recall that out double broom $G$ consists of a path $v_1, v_2, \ldots, v_{d-1}$, with $\ell$ leaves added to $v_1$ and $r$ leaves added to $v_{d-1}$. For convenience, we label one of the $\ell$ left leaves as $v_0$, and one of the $r$ right leaves as $v_{d}$. 

By Theorem \ref{thm:mixing-duality}, there are two vertices $x$ and $y$ of $G$, such that (a) $\Tmix=H(x,\pi) = H(y,\pi)$, and furthermore that (b) $x'=y$ and $y'=x$. It is clear that $x$ and $y$ must both be leaves, since otherwise (b) does not hold. We quickly realize that $x=v_0$ and $y=v_d$ (or another pair of left and right leaves) is the only sensible choice.

We have $\Tmix(G) = H(v_0,\pi) = H(v_0, v_d) - H(\pi, d)$ by equation \eqref{eqn:mixing-time-formula}. The stated formula follows from Lemmas 
\ref{lemma:double-broom-hitting-times} and \ref{lemma:double-broom-pi-access-times}.  Once again, the calculations are best checked using a mathematical software system.
%\qed
\end{proof}

Let us apply this lemma to paths and to double stars. The path $P_n$ is the balanced double broom $D_{n,n-1}$ with diameter $d=n-1$ and with $\ell=r=1$.  The mixing time formula of Lemma \ref{lemma:double-broom-mixing-time}  simplifies to 
$$
\Tmix(P_n) = \Tmix(D_{n,n-1})  = \frac{2 n^2 -4n + 3 }{6},
$$
which matches the path mixing time originally calculated in \cite{BW2013}. At the other extreme, each tree $G \in \tree{n}{3} $ is a double star, which is a double broom with diameter $d=3$. A double star with  $\ell$ left leaves and $r$ right leaves has   $n=2+\ell+r$ vertices in total. For such a double star, we have
\begin{equation}
\label{eqn:double-broom-diameter-3}
\Tmix(G) = \frac{4 \ell r+5 \ell+5 r+5}{2 (\ell+r+1)}
= \frac{4 \ell r+5(n-1)}{2 (n-1)} =  \frac{4 \ell r}{2 (n-1)} + \frac{5}{2}.
\end{equation}

We conclude this section with the mixing time formula for balanced double brooms, which appear in Theorem \ref{thm:broom-max-mixing}. The  formula depends on  the parity of  $n-d$, which determines whether there are an even or an odd number of leaves. 

\begin{corollary}
\label{cor:balanced-double-broom-mixing-time}
The mixing time of the balanced double broom $D_{n,d}$ is
\begin{align}
\label{eqn:balanced-mixing-case1}
\Tmix(D_{n,d})  &= 
\frac{(d-2)n-d+5}{2} - \frac{d^3-6d^2+8d}{6(n-1)}
& \mbox{if $n-d$ is odd},
\\
\label{eqn:balanced-mixing-case2}
\Tmix(D_{n,d}) &=
\frac{(d-2)n-d+5}{2} - \frac{d^3-6d^2+11d-6}{6(n-1)}
 & \mbox{if $n-d$ is even}.
\end{align}
\end{corollary}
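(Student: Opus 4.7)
The plan is to substitute the balanced-broom values of $\ell$ and $r$ into the formula of Lemma~\ref{lemma:double-broom-mixing-time} and simplify. My first step is to rewrite that formula symmetrically, since the double broom is invariant under exchanging left and right. Setting $s = \ell + r$ and $p = \ell r$, and expanding $d(12\ell(r-2) - 24r + 37) = 12 dp - 24 ds + 37 d$ together with $\ell(33 - 24r) + 33r = 33s - 24p$, the formula becomes
\begin{equation*}
\Tmix(G) = \frac{2 d^3 + 6 d^2 (s - 2) + 12 dp - 24 ds + 37 d + 33 s - 24 p - 42}{6(n-1)}.
\end{equation*}
Recall from the proof of Lemma~\ref{lemma:double-broom-pi-access-times} that $s = n - d + 1$, so only $p$ varies with the parity of $n - d$.

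Next I would split into cases. When $n - d$ is odd, $s$ is even and $\ell = r = s/2$, so $p = s^2/4$; when $n - d$ is even, $s$ is odd and $\{\ell, r\} = \{(s+1)/2,(s-1)/2\}$, so $p = (s^2 - 1)/4$. Substituting $s = n - d + 1$ and $p = (n-d+1)^2/4$ into the symmetric formula, then expanding and collecting over the common denominator $6(n-1)$, should produce equation~\eqref{eqn:balanced-mixing-case1}. For the even case, $p$ decreases by $1/4$, which changes the mixing time by
\begin{equation*}
\frac{12 d - 24}{6(n-1)} \cdot \Bigl(-\frac{1}{4}\Bigr) = -\frac{d - 2}{2(n-1)}.
\end{equation*}
This matches exactly the shift between the two claimed numerator polynomials, since $(d^3 - 6 d^2 + 11 d - 6) - (d^3 - 6 d^2 + 8 d) = 3(d - 2)$; this reduces the even case to the odd one and yields equation~\eqref{eqn:balanced-mixing-case2}.

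The main obstacle is purely algebraic bookkeeping in the first case: one must carefully expand the degree-three polynomial in $n$ and $d$ to see the compact right-hand side emerge. This computation is routine and, as elsewhere in the paper, is most safely verified with a computer algebra system such as Sage or Mathematica.
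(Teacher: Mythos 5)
Your proposal is correct and follows the same route as the paper: both substitute the balanced values of $\ell$ and $r$ into the double-broom mixing-time formula of Lemma~\ref{lemma:double-broom-mixing-time} and simplify (the paper also invokes a CAS for the bookkeeping). Your symmetric reparameterization in $s=\ell+r$ and $p=\ell r$, together with the observation that the two parity cases differ only by the shift $p \mapsto p - \tfrac14$, is a tidy refinement that reduces the verification to a single polynomial expansion, but it is not a genuinely different argument.
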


\begin{proof}
When $n-d$ is odd, we have $\ell = (n-d+1)/2 = r$. When $n-d$ is even, we have $\ell = (n-d+2)/2$ and  $r=(n-d)/2$. The two formulas follow via substitution into the equation in Lemma \ref{lemma:double-broom-mixing-time}.  
%\qed
\end{proof}

For example, when $d = 3$, the double broom $D_{n,3}$ is a balanced double star. Equations \eqref{eqn:balanced-mixing-case1} and \eqref{eqn:balanced-mixing-case2}, simplify to 
$$
\Tmix(D_{n,3}) = 
\left\{
\begin{array}{cl}
\frac{n}{2}+1 + \frac{1}{2(n-1)}& \mbox{for $n$ even} ,\\
\frac{n}{2}+1 & \mbox{for $n$ odd}. \\
\end{array}
\right.
$$

The next corollary about double brooms will play a crucial role in our proof of our main result.

\begin{corollary}
\label{cor:balanced-double-brooms-increasing}
For a fixed order $n$ and diameters $3 \leq d_1 < d_2 \leq n-1$, we have $\Tmix(D_{n,d_1}) < \Tmix(D_{n,d_2})$.  
\end{corollary}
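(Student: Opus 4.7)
The plan is to reduce the corollary to a single-step claim: for every $3 \le d \le n-2$, show that $\Tmix(\Dnd) < \Tmix(D_{n,d+1})$. Telescoping across $d = d_1, d_1+1, \ldots, d_2-1$ then yields $\Tmix(D_{n,d_1}) < \Tmix(D_{n,d_2})$. Because a single-step advance of $d$ flips the parity of $n-d$, I will split into two cases: (i) $n-d$ is odd, in which $\Tmix(\Dnd)$ uses \eqref{eqn:balanced-mixing-case1} and $\Tmix(D_{n,d+1})$ uses \eqref{eqn:balanced-mixing-case2}; and (ii) $n-d$ is even, with the roles reversed.

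The computational key is the factorization of the cubic correction terms appearing in Corollary \ref{cor:balanced-double-broom-mixing-time}: namely $d^3 - 6d^2 + 8d = d(d-2)(d-4)$ and $d^3 - 6d^2 + 11d - 6 = (d-1)(d-2)(d-3)$. Under the substitution $d \mapsto d+1$ these become $(d+1)(d-1)(d-3)$ and $d(d-1)(d-2)$, respectively. In both parity cases, the linear-in-$d$ portion $\tfrac{(d-2)n-d+5}{2}$ of $\Tmix(\Dnd)$ contributes exactly $(n-1)/2$ to the difference $\Tmix(D_{n,d+1})-\Tmix(\Dnd)$, and the two cubic numerators share a common quadratic factor so that their difference cleanly cancels the $\tfrac{1}{6(n-1)}$ factor down to $\tfrac{1}{2(n-1)}$ times a quadratic in $d$.

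After simplification, I expect $\Tmix(D_{n,d+1}) - \Tmix(\Dnd)$ to equal $\bigl((n-1)^2 - Q(d)\bigr)/(2(n-1))$, where $Q(d)=d(d-2)$ in Case (i) and $Q(d)=(d-1)(d-3)$ in Case (ii). For every $d \le n-2$, both choices of $Q(d)$ are bounded above by $(n-2)(n-4)$, which is strictly less than $(n-1)^2$ for all $n \ge 2$ (since $(n-1)^2 - (n-2)(n-4) = 4n-7$). Hence each single-step difference is strictly positive, and the telescoping argument closes.

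The main obstacle is clerical rather than conceptual: carefully pairing the two parity formulas for consecutive values of $d$ and keeping signs straight when combining the cubic numerators. A symbolic computation system is convenient for verification, but the factorizations above make the cancellations transparent by hand, and the bound $Q(d) < (n-1)^2$ is immediate once the difference is put into the advertised form.
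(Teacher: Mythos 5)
Your proposal is correct and follows essentially the same route as the paper: reduce to a single step $d \to d+1$, split by the parity of $n-d$, simplify the difference, and verify positivity for $d \le n-2$. Your factored forms $Q(d)=d(d-2)$ and $Q(d)=(d-1)(d-3)$ agree with the paper's unfactored expressions $\frac{-d^2+2d+(n-1)^2}{2(n-1)}$ and $\frac{-d^2+4d+n^2-2n-2}{2(n-1)}$, and in fact the latter reveals a typo in the paper's further simplification, which writes $\frac{n-1}{2}-\frac{d^2-2d+3}{2(n-1)}$ where it should read $\frac{n-1}{2}-\frac{d^2-4d+3}{2(n-1)}$ (both are positive, so the conclusion is unaffected).
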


\begin{proof}
If we increase the diameter from $d$ to $d+1$ and go from an even number of leaves to an odd number of leaves, then the change in the mixing time is equation \eqref{eqn:balanced-mixing-case1} for $D_{n,d+1}$ minus equation \eqref{eqn:balanced-mixing-case2}, which simplifies to
$$
\Tmix(D_{n,d+1}) - \Tmix(D_{n,d})  = \frac{-d^2+2 d+(n-1)^2}{2 (n-1)} = \frac{n-1}{2} - \frac{d^2-2d}{2(n-1)} > 0.
$$
If we increase the diameter from $d$ to $d+1$ and go from an odd number of leaves to an even number of leaves, then the change in the mixing time is equation \eqref{eqn:balanced-mixing-case2} for $D_{n,d+1}$ minus equation \eqref{eqn:balanced-mixing-case1}, which simplifies to
\begin{align*}
\Tmix(D_{n,d+1}) - \Tmix(D_{n,d})  &=  \frac{-d^2+4 d+n^2-2 n-2}{2 n-2}  \\
&= \frac{n-1}{2} - \frac{d^2-2d+3}{2(n-1)} > 0.
\end{align*}
In other words, fixing $n$, increasing the diameter also increases the mixing time of the corresponding balanced double broom.
%\qed
\end{proof}

%%%%%%%%%%%%%%%%%%%%%%%%
%
%  MIXING TIME
%
%%%%%%%%%%%%%%%%%%%%%%%%%

\section{Maximizing the Mixing Time}
\label{sec:mixing-time}

We embark on proving that $\max_{G \in \tree{n}{d}} \Tmix(G) = \Tmix(\Dnd)$ where $\Dnd$ is the balanced double broom. 
Recall that a \emph{caterpillar} is a tree $G$ in which every vertex is within distance one of a central path $P=\{v_0, v_1, \ldots, v_d\}$, called the \emph{spine} of $G$.

Starting from any tree $G \neq \Dnd$, we make incremental changes to the tree that increase the mixing time. This evolutionary process completes when we have created the balanced double broom $\Dnd$.
Our evolution consists of three phases, as illustrated in Figure \ref{fig:example}.
In Phase One, we convert our tree into a caterpillar, and perhaps reduce the diameter to $s < d$. 
In Phase Two, we move leaves outwards in pairs, which results in either a double broom, or a double broom with one additional leaf located somewher on the spine. In Phase Three, we move this additional leaf (if it exists), and balance the two sides of the double broom. Finally, if we reduced the diameter in phase one, we simply observe that $\Tmix(\Dns) < \Tmix(\Dnd)$. 

In the subsections that follow, we show that the mixing time increases at every step of the evolution. We then present the proof of Theorem \ref{thm:broom-max-mixing}.
First, we gather some necessary definitions and operations.

\begin{figure}

\begin{center}

\begin{tikzpicture}[scale=0.8]

\def\spacer{-3}

\draw (13.66,-0.75) rectangle (14.33,-3.25) node[pos=.5, rotate=90] {Phase One};

\draw (13.66,-4.25) rectangle (14.33,-8.75) node[pos=.5, rotate=90] {Phase Two};

\draw (13.66,-10.25) rectangle (14.33,-15) node[pos=.5, rotate=90] {Phase Three};

%%%%%%%%%%%%%%%%%%%
% starting tree
\begin{scope}[shift={(1,0)}]

\node at (9.5,0.5) {\begin{tabular}{p{2in}} Start with a tree in $\tree{13}{6}$. \end{tabular}};

\foreach \x in {135,165,195,225}
{
\draw[fill] (\x:1) circle (2.5pt);
\draw[thick] (0,0) -- (\x:1);
}

\draw[fill] (0,0) circle (2.5pt);
\draw[fill] (1,0) circle (2.5pt);
\draw[thick] (0,0) -- (1,0);

\node[above] at (110:.8) {$v_0=z$};
\node[above] at (.2,0) {$v_1$};
\node[above] at (1,0) {$v_2$};

\begin{scope}[shift={(1,0)}]

\foreach \x in {15}
{
\draw[thick] (0,0) -- (\x:3);
\foreach \y in {1,2,3}
{
\draw[fill] (\x:\y) circle (2.5pt);
}
}

\foreach \x in {-15}
{
\draw[thick] (0,0) -- (\x:3);
\foreach \y in {1,2,3}
{
\draw[thick, fill=white] (\x:\y) circle (2.5pt);
}
}

\node[above] at (20:.9) {$v_3$};
\node[below] at (15:2) {$v_4$};

\node[below] at (14:3.5) {$v_5 =z'$};

\begin{scope}[shift={(15:1)}]
\draw[thick] (0,0) -- (45:1);
\draw[thick, fill=gray] (45:1) circle (2.5pt);

\end{scope}

\end{scope}
\end{scope}

%%%%%% Phase 1: caterpillar
\begin{scope}[shift={(0,\spacer)}]

\draw[-latex, very thick, gray] (6.5,1.75) to [bend left] (6.5,.25);
\node at (10.1,1) {\begin{tabular}{p{1.75in}} Create a caterpillar in $\tree{13}{5}$ with spine $v_0, v_1, \ldots, v_5$. \end{tabular}};

\foreach \x in {0,1,2,3,4,5}
{
\draw[fill] (\x,0) circle (2.5pt);
\node[above] at (\x,0) {$v_{\x}$};
}

\draw[thick] (0,0) -- (5,0);

\begin{scope}[shift={(1,0)}]

\foreach \x in {-75, -90, -105}
{
\draw[thick] (0,0) -- (\x:1);
\draw[fill] (\x:1) circle (2.5pt);

}

\end{scope}

\begin{scope}[shift={(2,0)}]

\foreach \x in {-75, -90, -105}
{
\draw[thick] (0,0) -- (\x:1);
\draw[thick, fill=white] (\x:1) circle (2.5pt);

}

\end{scope}

\draw[thick] (3,0) -- (3,-1);
\draw[thick, fill=gray] (3,-1) circle (2.5pt);

\end{scope}

%%%%%% Phase 2: step 1
\begin{scope}[shift={(0,2*\spacer)}]

\draw[-latex, very thick, gray] (6.5,1.75) to [bend left] (6.5,.25);
\node at (10.1,1) {\begin{tabular}{p{1.75in}} Move a $v_2$-leaf to $v_1$ and move the $v_3$-leaf to $v_4$. \end{tabular}};

\foreach \x in {0,1,2,3,4,5}
{
\draw[fill] (\x,0) circle (2.5pt);
\node[above] at (\x,0) {$v_{\x}$};
}

\draw[thick] (0,0) -- (5,0);

\begin{scope}[shift={(1,0)}]

\foreach \x in {-82,-98,-114}
{
\draw[thick] (0,0) -- (\x:1);
\draw[fill] (\x:1) circle (2.5pt);
}

\foreach \x in {-66}
{
\draw[thick] (0,0) -- (\x:1);
\draw[thick, fill=white] (\x:1) circle (2.5pt);
}

\end{scope}

\begin{scope}[shift={(2,0)}]

\foreach \x in {-82,-98}
{
\draw[thick] (0,0) -- (\x:1);
\draw[thick, fill=white] (\x:1) circle (2.5pt);
}

\end{scope}

\draw[thick] (4,0) -- (4,-1);
\draw[thick, fill=gray] (4,-1) circle (2.5pt);

\end{scope}

%%%%%% Phase 2: step 2
\begin{scope}[shift={(0,3*\spacer)}]

\draw[-latex, very thick, gray] (6.5,1.75) to [bend left] (6.5,.25);
\node at (10.1,1) {\begin{tabular}{p{1.75in}} Move a $v_2$-leaf to $v_1$ and move the other $v_2$-leaf to $v_3$ to create a near double broom. \end{tabular}};

\foreach \x in {0,1,2,3,4,5}
{
\draw[fill] (\x,0) circle (2.5pt);
\node[above] at (\x,0) {$v_{\x}$};
}

\draw[thick] (0,0) -- (5,0);

\begin{scope}[shift={(1,0)}]

\foreach \x in {-90,-106,-122}
{
\draw[thick] (0,0) -- (\x:1);
\draw[fill] (\x:1) circle (2.5pt);
}

\foreach \x in {-58,-74}
{
\draw[thick] (0,0) -- (\x:1);
\draw[fill=white] (\x:1) circle (2.5pt);
}

\end{scope}

\draw[thick] (3,0) -- (3,-1);
\draw[thick, fill=white] (3,-1) circle (2.5pt);

\draw[thick] (4,0) -- (4,-1);
\draw[thick, fill=gray] (4,-1) circle (2.5pt);

\end{scope}

%%%%%% Phase 3: step 1
\begin{scope}[shift={(0,4*\spacer)}]

\draw[-latex, very thick, gray] (6.5,1.75) to [bend left] (6.5,.25);
\node at (10.1,1) {\begin{tabular}{p{1.75in}} Move the $v_3$ leaf to $v_4$ to create a double broom. \end{tabular}};

\foreach \x in {0,1,2,3,4,5}
{
\draw[fill] (\x,0) circle (2.5pt);
\node[above] at (\x,0) {$v_{\x}$};
}

\draw[thick] (0,0) -- (5,0);

\begin{scope}[shift={(1,0)}]

\foreach \x in {-90,-106,-122}
{
\draw[thick] (0,0) -- (\x:1);
\draw[fill] (\x:1) circle (2.5pt);
}

\foreach \x in {-58,-74}
{
\draw[thick] (0,0) -- (\x:1);
\draw[fill=white] (\x:1) circle (2.5pt);
}

\end{scope}

\begin{scope}[shift={(4,0)}]

\foreach \x in {-82}
{
\draw[thick] (0,0) -- (\x:1);
\draw[thick, fill=gray] (\x:1) circle (2.5pt);
}

\foreach \x in {-98}
{
\draw[thick] (0,0) -- (\x:1);
\draw[fill=white] (\x:1) circle (2.5pt);
}

\end{scope}

\end{scope}

%%%%%% Phase 3: step 2
\begin{scope}[shift={(0,5*\spacer)}]

\draw[-latex, very thick, gray] (6.5,1.75) to [bend left] (6.5,.25);
\node at (10.1,1) {\begin{tabular}{p{1.75in}} Move a $v_1$ leaf to $v_4$ to create the balanced double broom $D_{13,5}$.\end{tabular}};

\foreach \x in {0,1,2,3,4,5}
{
\draw[fill] (\x,0) circle (2.5pt);
\node[above] at (\x,0) {$v_{\x}$};
}

\draw[thick] (0,0) -- (5,0);

\begin{scope}[shift={(1,0)}]

\foreach \x in {-64}
{
\draw[thick] (0,0) -- (\x:1);
\draw[fill=white] (\x:1) circle (2.5pt);
}

\foreach \x in {-82,-98,-114}
{
\draw[thick] (0,0) -- (\x:1);
\draw[fill] (\x:1) circle (2.5pt);
}

\end{scope}

\begin{scope}[shift={(4,0)}]

\foreach \x in {-74}
{
\draw[thick] (0,0) -- (\x:1);
\draw[thick, fill=gray] (\x:1) circle (2.5pt);
}

\foreach \x in { -90, -106}
{
\draw[thick] (0,0) -- (\x:1);
\draw[thick, fill=white] (\x:1) circle (2.5pt);
}

\end{scope}

\end{scope}

%%%%%% Endgame
\begin{scope}[shift={(0,6*\spacer)}]

\draw[-latex, very thick, gray] (6.5,1.75) to [bend left] (6.5,.25);
\node at (10.1,1) {\begin{tabular}{p{1.75in}} Replace $D_{13,5} \in \tree{13}{5}$ with $D_{13,6} \in \tree{13}{6}$ so that the diameter is the same as the original tree.\end{tabular}};

\foreach \x in {0,1,2,3,4,5,6}
{
\draw[fill] (\x,0) circle (2.5pt);
\node[above] at (\x,0) {$v_{\x}$};
}

\draw[thick] (0,0) -- (6,0);

\begin{scope}[shift={(1,0)}]

\foreach \x in {-74, -90, -106}
{
\draw[fill] (\x:1) circle (2.5pt);
\draw[thick] (0,0) -- (\x:1);
}

\end{scope}

\begin{scope}[shift={(5,0)}]

\foreach \x in {-74, -90, -106}
{
\draw[fill] (\x:1) circle (2.5pt);
\draw[thick] (0,0) -- (\x:1);
}

\end{scope}

\end{scope}

\end{tikzpicture}

\end{center}

\caption{The evolution of a tree into a balanced double broom. At each step, the mixing time increases. }
\label{fig:example}

\end{figure}

\begin{definition}
\label{def:near-double-broom}
A \emph{near double broom} is a tree that is not a double broom, but such that there exists exactly one leaf whose removal results in a double broom.
\end{definition}

\begin{definition}
Let $G$ be a tree. A \emph{pessimal path} for $G$ is a leaf-to-leaf subpath $P = \{ v_0, v_1, \ldots, v_s \}$,   such that  
$$
\Tmix = H(v_0,\pi) = H(v_s, \pi)
$$
where the existence of these two leaves that achieve $\Tmix(G)$ is guaranteed by Theorem \ref{thm:mixing-duality}.
\end{definition}

We evolve our tree by moving one or two leaves at a time. We refer to these leaf operations as \emph{tree surgeries.}

\begin{definition}
Let $G \in \tree{n}{d}$ be a non-caterpillar, with pessimal path $P = \{ v_0, v_1, \ldots, v_s \}$. Let $y \in V(G \backslash P)$ be a leaf that is not adjacent to $P$. Define the tree surgery $\sigma(G,P;y)$ to be the tree obtained by moving  leaf $y$ to be adjacent to  the pessimal path vertex $v_k$ that is closest  to $y$. 
\end{definition}

\begin{definition}
Let $G \in \tree{n}{d}$ be a caterpillar with spine $P=\{v_0, v_1, \ldots, v_d\}$. If $G$ has a leaf $x \in V(G \backslash P)$ adjacent to $v_i$, then we  define $\tau(G,P;(i,j))$ to be the caterpillar $G - (v_i, x) + (v_j, x)$. If $G$ also has a leaf $y \in V(G \backslash P)$ adjacent to $v_k$, then we define the \emph{tree surgery}
$\tau(G,P; (i,j) \wedge (k,\ell))$ to be the caterpillar $G - (v_i, x) + (v_j, x) - (v_k,y) + (v_{\ell}, y).$
\end{definition}

Examples of $\sigma(G,P;y)$, $\tau(G,P; (i,i-1) \wedge (j,j+1))$, and $\tau(G,P; (i,d-1))$ are shown in Figure \ref{fig:cat-to-double-broom}. These are the three types of tree surgeries used in our evolution from tree to balanced double broom.

\begin{figure}[ht]
\begin{center}
\begin{tikzpicture}[scale=0.7]

%%%%%%%%%%%%%%%%%%%%%% LEAF TRANSPLANT
\begin{scope}[shift={(0,0.5)}]

\node at (-1,-0.5) {\small (a)};
%%%%% BEFORE
\begin{scope}

\draw[thick] (0,0) -- (6,0);

\foreach \x in {0,1,2,3,4,5,6}
{
\draw[fill] (\x,0) circle (2.5pt);
\node[above] at (\x,0) {$v_{\x}$};
}

\begin{scope}[shift={(3,0)}]

\draw[thick] (0,0) -- (-70:2);
\draw[fill] (-70:1) circle (2.5pt);
\draw[fill] (-70:2) circle (2.5pt);

\begin{scope}[shift={(-70:1)}]

\draw[thick] (0,0) -- (-110:1);
\draw[fill=white] (-110:1) circle (2.5pt);
\node[left] at (-110:1) {$y$};

\draw[-latex] (-120:.9) to [bend left] (-.4,.8);

\end{scope}

\end{scope}

\draw[thick] (1,0) -- (1,-1);
\draw[fill] (1,-1) circle (2.5pt);

\begin{scope}[shift={(4,0)}]
\draw[thick] (0,0) -- (-60:2);
\draw[fill] (-60:1) circle (2.5pt);
\draw[fill] (-60:2) circle (2.5pt);

\end{scope}

\end{scope}

\draw[very thick, -latex] (6.5, -.5) -- (7.5,-.5);

%%%%% AFTER
\begin{scope}[shift={(8,0)}]

\draw[thick] (0,0) -- (6,0);

\foreach \x in {0,1,2,3,4,5,6}
{
\draw[fill] (\x,0) circle (2.5pt);
\node[above] at (\x,0) {$v_{\x}$};
}

\begin{scope}[shift={(3,0)}]

\draw[thick] (0,0) -- (-70:2);
\draw[thick] (0,0) -- (-110:1);

\draw[fill] (-70:1) circle (2.5pt);
\draw[fill] (-70:2) circle (2.5pt);
\draw[fill=white] (-110:1) circle (2.5pt);

\node[left] at (-110:1) {$y$};

\end{scope}

\draw[thick] (1,0) -- (1,-1);
\draw[fill] (1,-1) circle (2.5pt);

\begin{scope}[shift={(4,0)}]
\draw[thick] (0,0) -- (-60:2);
\draw[fill] (-60:1) circle (2.5pt);
\draw[fill] (-60:2) circle (2.5pt);

\end{scope}

\end{scope}
\end{scope}

%%%%%%%%%%%%%%%%%%%%%% SINGLE SURGERY
\begin{scope}[shift={(0,-6)}]

\node at (-1,-0.5) {\small (c)};
%%%%% BEFORE
\begin{scope}

\draw[thick] (0,0) -- (6,0);

\foreach \x in {0,1,2,3,4,5,6}
{
\draw[fill] (\x,0) circle (2.5pt);
\node[above] at (\x,0) {$v_{\x}$};
}

\begin{scope}[shift={(1,0)}]

\foreach \x in {-64,-82, -98, -116}
{
\draw[thick] (0,0) -- (\x:1);
\draw[fill] (\x:1) circle (2.5pt);
}

\end{scope}

\draw[thick] (2,0) -- (2,-1);
\draw[fill=white] (2,-1) circle (2.5pt);

\draw[-latex] (2.1,-1.25) to [bend right] (4.7,-1.25);

\begin{scope}[shift={(5,0)}]

\foreach \x in {-82,-98}
{
\draw[thick] (0,0) -- (\x:1);
}

\draw[fill] (-98:1) circle (2.5pt);
\draw[fill] (-82:1) circle (2.5pt);

\end{scope}

\end{scope}

\draw[very thick, -latex] (6.5, -.5) -- (7.5,-.5);

%%%%% AFTER
\begin{scope}[shift={(8,0)}]

\draw[thick] (0,0) -- (6,0);

\foreach \x in {0,1,2,3,4,5,6}
{
\draw[fill] (\x,0) circle (2.5pt);
\node[above] at (\x,0) {$v_{\x}$};
}

\begin{scope}[shift={(1,0)}]

\foreach \x in {-64,-82, -98, -116}
{
\draw[thick] (0,0) -- (\x:1);
\draw[fill] (\x:1) circle (2.5pt);
}

\end{scope}

\begin{scope}[shift={(5,0)}]

\foreach \x in {-72,-90,-108}
{
\draw[thick] (0,0) -- (\x:1);
}

\draw[fill] (-72:1) circle (2.5pt);
\draw[fill] (-90:1) circle (2.5pt);
\draw[fill=white] (-108:1) circle (2.5pt);

\end{scope}

\end{scope}
\end{scope}

%%%%%%%%%%%%%%%%%%%%%% DOUBLE SURGERY
\begin{scope}[shift={(0,-3)}]

\node at (-1,-0.5) {\small (b)};
%%%%% BEFORE
\begin{scope}

\draw[thick] (0,0) -- (6,0);

\foreach \x in {0,1,2,3,4,5,6}
{
\draw[fill] (\x,0) circle (2.5pt);
\node[above] at (\x,0) {$v_{\x}$};
}

\begin{scope}[shift={(1,0)}]

\foreach \x in {-72, -90, -108}
{
\draw[thick] (0,0) -- (\x:1);
}

\draw[fill] (-108:1) circle (2.5pt);
\draw[fill] (-90:1) circle (2.5pt);
\draw[fill] (-72:1) circle (2.5pt);

\end{scope}

\draw[thick] (3,0) -- (3,-1);
\draw[fill=white] (3,-1) circle (2.5pt);

\draw[-latex] (2.9,-1.25) to [bend left] (2.1,-1.25);

\begin{scope}[shift={(4,0)}]

\foreach \x in {-82,-98}
{
\draw[thick] (0,0) -- (\x:1);
}

\draw[fill] (-98:1) circle (2.5pt);
\draw[fill=white] (-82:1) circle (2.5pt);

\end{scope}

\draw[-latex] (4.3,-1.25) to [bend right] (4.8,-1.25);

\draw[thick] (5,0) -- (5,-1);
\draw[fill] (5,-1) circle (2.5pt);

\end{scope}

\draw[very thick, -latex] (6.5, -.5) -- (7.5,-.5);

%%%%% AFTER
\begin{scope}[shift={(8,0)}]

\draw[thick] (0,0) -- (6,0);

\foreach \x in {0,1,2,3,4,5,6}
{
\draw[fill] (\x,0) circle (2.5pt);
\node[above] at (\x,0) {$v_{\x}$};
}

\begin{scope}[shift={(1,0)}]

\foreach \x in {-72, -90, -108}
{
\draw[thick] (0,0) -- (\x:1);
}

\draw[fill] (-108:1) circle (2.5pt);
\draw[fill] (-90:1) circle (2.5pt);
\draw[fill] (-72:1) circle (2.5pt);

\end{scope}

\draw[thick] (2,0) -- (2,-1);
\draw[fill=white] (2,-1) circle (2.5pt);

\draw[thick] (4,0) -- (4,-1);
\draw[fill] (4,-1) circle (2.5pt);

\begin{scope}[shift={(5,0)}]

\foreach \x in {-82,-98}
{
\draw[thick] (0,0) -- (\x:1);
}

\draw[fill=white] (-98:1) circle (2.5pt);
\draw[fill] (-82:1) circle (2.5pt);

\end{scope}

\end{scope}

\end{scope}

\end{tikzpicture}
\end{center}

\caption{(a) An example of $\sigma(G,P;y)$. (b) An example of $\tau(G,P; (3,2) \wedge (4,5))$. (c) An example of $\tau(G,P; (2,5))$.}
\label{fig:cat-to-double-broom}
\end{figure}
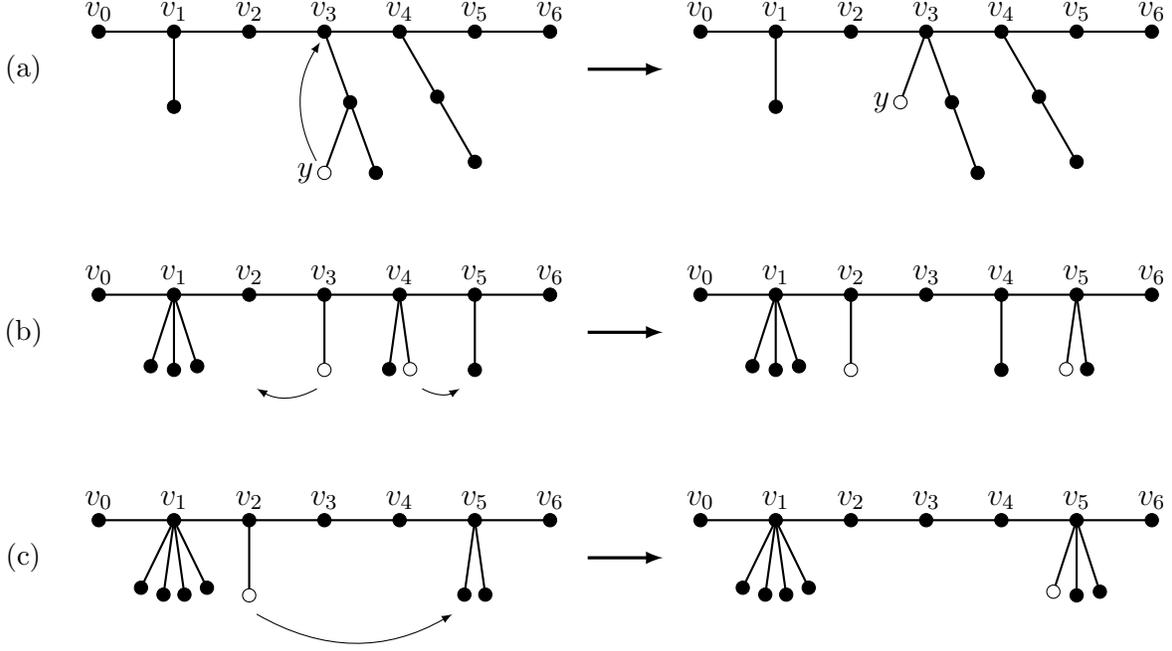

\subsection{Phase One: Tree to Caterpillar}

In this section, we show that for any tree $G \in \tree{n}{d}$ that is not a caterpillar, there exists a caterpillar $\newG \in \tree{n}{s}$, where $s \leq d$, with a larger mixing time $\Tmix(\newG) > \Tmix(G)$. From here forward, we will use starred notation to refer to quantities related to $\newG$. For example, the stationary distribution of $\newG$ is $\newpi$, its hitting times are $\newH(u,v)$, and so on.

\begin{lemma}
\label{lemma:caterpillarify-hitting-times-v2}
Let $G \in \tree{n}{d}$ be a non-caterpillar with leaf-to-leaf path $P = \{ v_0, v_1, \ldots, v_s \}$, where $s \leq d$, and leaf $y \in V(G \backslash P)$ that is not adjacent to $P$. If $\newG = \sigma(G,P;y)$ then the following statements hold.
\begin{enumerate}[label=(\alph*)]
\item $\newH(v_i, v_j)=H(v_i,v_j)$, for all $0 \leq i,j \leq s$.
\item More generally, for $0 \leq j \leq s$, we have $\newH(v,v_j) \leq H(v,v_j)$ for all $v \in V(G)$, 
with  $\newH(y,v_j) < H(y,v_j)$, in particular.
\item $\newH(\newpi, v_j) < H(\pi, v_j)$ for $0 \leq i \leq s$. 
\end{enumerate}
\end{lemma}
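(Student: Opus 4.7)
The plan is to prove the three parts in sequence, exploiting the structural fact that $\sigma(G, P; y)$ simply detaches $y$ from $p(y)$ and reattaches it to $v_k$: all distances among non-$y$ vertices are preserved, and the only degree differences are $\deg(p(y)) - \newdeg(p(y)) = 1$ and $\deg(v_k) - \newdeg(v_k) = -1$.

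For part (a), I observe that the subtree $T$ containing $y$ (which hangs off $v_k$) lies entirely on the same side of every edge $(v_a, v_{a+1}) \in P$. Hence moving $y$ from $p(y) \in T$ to $v_k$ does not change which side of any such edge contains $y$, so the cardinality $|V_{v_a:v_{a+1}}|$ is preserved. Applying equation \eqref{eqn:adjhtime} edge-by-edge and telescoping $H(v_i, v_j)$ as a sum of adjacent hitting times along $P$ yields $\newH(v_i, v_j) = H(v_i, v_j)$.

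For part (b), I split into two cases. When $v \neq y$, I expand $H(v, v_j)$ and $\newH(v, v_j)$ via Lemma \ref{lemma:hitting-time}. Since distances among non-$y$ vertices are unchanged, $\ell(v, w; v_j) = \newell(v, w; v_j)$ for all $w \neq y$. Since $y$ is a leaf, a direct computation from the definition of $\ell$ gives $\ell(v, y; v_j) = \ell(v, p(y); v_j)$ and $\newell(v, y; v_j) = \ell(v, v_k; v_j)$. Collecting all contributions yields
\[
H(v, v_j) - \newH(v, v_j) = 2\bigl[\ell(v, p(y); v_j) - \ell(v, v_k; v_j)\bigr].
\]
Because the $(v_j, p(y))$-path traverses $v_k$ before entering $T$, it contains the $(v_j, v_k)$-path as a prefix, so the common prefix with the $(v_j, v)$-path is at least as long in the former case, making the right-hand side nonnegative. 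When $v = y$, I use the one-step relations $H(y, v_j) = 1 + H(p(y), v_j)$ and $\newH(y, v_j) = 1 + \newH(v_k, v_j)$, combined with the tree-path decomposition $H(p(y), v_j) = H(p(y), v_k) + H(v_k, v_j) \geq 1 + H(v_k, v_j) \geq 1 + \newH(v_k, v_j)$ (the first inequality uses $H(p(y), v_k) \geq d(p(y), v_k) \geq 1$ since $y$ is not adjacent to $P$, and the second applies part (b) at $v = v_k$), to conclude $\newH(y, v_j) < H(y, v_j)$.

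For part (c), using $|E(G)| = |E(\newG)| = n - 1$, we have $\pi_u = \deg(u)/(2(n-1))$ and $\newpi_u = \newdeg(u)/(2(n-1))$. Writing $\newdeg = \deg + \delta$ with $\delta(v_k) = 1$, $\delta(p(y)) = -1$, and zero elsewhere, I obtain
\[
H(\pi, v_j) - \newH(\newpi, v_j) = \frac{1}{2(n-1)} \left[ \sum_u \deg(u) \bigl(H(u, v_j) - \newH(u, v_j)\bigr) + \newH(p(y), v_j) - \newH(v_k, v_j) \right].
\]
The sum is strictly positive by part (b), since the $u = y$ term $\deg(y)\bigl(H(y, v_j) - \newH(y, v_j)\bigr)$ is strictly positive while all other terms are nonnegative; and the bracketed difference $\newH(p(y), v_j) - \newH(v_k, v_j)$ is nonnegative because $v_k$ lies on the $(p(y), v_j)$-path in $\newG$. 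The main obstacle I anticipate is justifying the geometric inequality $\ell(v, p(y); v_j) \geq \ell(v, v_k; v_j)$ uniformly over $v$; this requires case analysis on the position of $v$ relative to $T$ and $P$ (in particular, whether the $(v_j, v)$-path enters $T$ at $v_k$). The remainder is careful bookkeeping.
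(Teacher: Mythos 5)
Your proof is correct and follows essentially the same strategy as the paper: compute the change in hitting times and then isolate the degree changes at $v_k$ and at $p(y)$ (the paper's $x$) when comparing $\pi$-access times. Your write-up is substantially more detailed than the paper's — notably in parts (a) and (b), where the paper asserts the facts in one sentence each while you derive the exact identity $H(v,v_j)-\newH(v,v_j)=2\bigl[\ell(v,p(y);v_j)-\ell(v,v_k;v_j)\bigr]$ from Lemma~\ref{lemma:hitting-time} and then justify nonnegativity of the bracket via the prefix structure of tree paths. Your part (c) decomposition, writing $\newdeg=\deg+\delta$ and separating the $\delta$-terms, is algebraically a slight rearrangement of the paper's bound (which instead isolates the two terms $\newpi_{v_k}\newH(v_k,v_j)-\pi_{v_k}H(v_k,v_j)$ and $\newpi_x\newH(x,v_j)-\pi_x H(x,v_j)$), but both boil down to the same observations; yours has the advantage of making the strict positivity from the $u=y$ term explicit rather than absorbing it into the final bound. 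One small simplification you could make: in the $v=y$ case of (b) you invoke "part (b) at $v=v_k$" for $H(v_k,v_j)\geq\newH(v_k,v_j)$, but since $v_k$ lies on $P$, part (a) already gives equality there.
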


\begin{proof}
Let $x \notin V(P)$ be the unique neighbor of $y$ in $G$, and let  $v_k$ be the closest $P$-vertex to $y$ (and to $x$). We have $\newG  = \sigma(G,P; y) = G - (x,y) + (v_k,y)$. 

(a) By equation \eqref{eqn:hitting-time}, we have  $\newH(v_i, v_j)=H(v_i,v_j)$ for all $0 \leq i,j \leq s$.  

(b) Moving $y$ closer to $v_j$ cannot increase hitting times to $v_j$, so $\newH(v,v_j) \leq H(v,v_j)$ for all $v \in V(G)$. In particular, we have 
$$\newH(y,v_j) = 1 + \newH(v_k,v_j) < 1 + H(x,v_k) + H(v_k, v_j)  = H(y,v_j).$$

(c) Now let's show that $\newH(\newpi, v_j) < H(\pi, v_j)$.  When we change $G$ into $\newG$, we have $\newdeg(v_k) = \deg(v_k)+1$ and $\newdeg(x) = \deg(x)-1$, while all other degrees remain the same.
As noted above, the  hitting times to $v_j$ are non-increasing and the only $\pi_w$ that change are $\pi_{v_k}$ and $\pi_x$. Therefore,
\begin{align*}
& \quad \newH(\newpi, v_j) - H(\pi,v_j)   \\
&= \sum_v \newpi_v \newH(v,v_j)  - \sum_v \pi_v H(v,v_j)    \\
&\leq \newpi_{v_k} \newH(v_k,v_j) + \newpi_x \newH(x,v_j) - \pi_{v_k} H(v_k,v_j) + \pi_x H(x,v_j) \\
&= \frac{1}{2(n-1)} \bigg(
\newdeg(v_k) \newH(v_k,v_j) + \newdeg(x) \newH(x,v_j) \\
& \qquad \qquad - \deg(v_k) H(v_k,v_j) - \deg(x) H(x,v_j)  \bigg)\\
&\leq  \frac{1}{2(n-1)} \left(  H(v_k,v_j) - H(x,v_j) \right) < 0,
\end{align*}
where the first inequality follows from statement (b). 
%\qed  
\end{proof}

\begin{lemma}
\label{lemma:caterpillarify-mixing-time-step1}
Let $G \in \tree{n}{d}$ be a non-caterpillar with pessimal path $P = \{ v_0, v_1, \ldots, v_s \}$, where $s \leq d$, and leaf $y \in V(G \backslash P)$ that is not adjacent to $P$. If $\newG = \sigma(G,P;y)$ then
$\Tmix(G) = H(v_0, \pi) < \newH(v_0, \newpi) \leq \Tmix(\newG).$
\end{lemma}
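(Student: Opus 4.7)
The plan is to chain together the three parts of Lemma~\ref{lemma:caterpillarify-hitting-times-v2} via the mixing-time formula from Theorem~\ref{thm:mixing-time}. Since $P$ is a pessimal path, both $v_0$ and $v_s$ are leaves that achieve $\Tmix(G)$, and the mixing-duality result (Theorem~\ref{thm:mixing-duality}) lets us identify $v_s$ as a $v_0$-pessimal vertex of $G$. So the first step I would take is to record the identity
$$
\Tmix(G) = H(v_0,\pi) = H(v_s, v_0) - H(\pi, v_0).
$$

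Next I would turn to $\newH(v_0, \newpi)$ in the modified tree $\newG$. Writing $v_0^*$ for a $v_0$-pessimal vertex of $\newG$, Theorem~\ref{thm:mixing-time} gives
$$
\newH(v_0, \newpi) = \newH(v_0^*, v_0) - \newH(\newpi, v_0).
$$
The key observation is that the surgery touches neither $v_0$ nor $v_s$, so $v_s$ survives as a leaf of $\newG$ and is always available as a (possibly suboptimal) candidate for $v_0^*$; hence $\newH(v_0^*, v_0) \geq \newH(v_s, v_0)$. Part~(a) of the preceding lemma then gives $\newH(v_s, v_0) = H(v_s, v_0)$, and part~(c) gives the strict inequality $\newH(\newpi, v_0) < H(\pi, v_0)$. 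Stringing these together yields
$$
\newH(v_0,\newpi) \;\geq\; H(v_s,v_0) - \newH(\newpi, v_0) \;>\; H(v_s,v_0) - H(\pi,v_0) \;=\; \Tmix(G).
$$
The outer bound $\newH(v_0,\newpi) \leq \Tmix(\newG)$ is then immediate from the definition of $\Tmix$ as a maximum over starting vertices.

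There is no real obstacle here: the preceding lemma does all the analytic work. The only pieces of bookkeeping are invoking mixing duality to single out $v_s$ as the relevant pessimal vertex in $G$, and then noting that $v_s$ remains a valid point of comparison after the surgery. Together these keep the maximum-hitting-time term from going down while strictly shrinking the $H(\pi, v_0)$ term, which is exactly what is needed to push $H(v_0,\pi)$ up and thereby increase $\Tmix$.
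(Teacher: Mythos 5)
Your proof is correct and follows essentially the same strategy as the paper's: chain the mixing-time formula of Theorem~\ref{thm:mixing-time} with parts (a) and (c) of Lemma~\ref{lemma:caterpillarify-hitting-times-v2}. The only differences are cosmetic: you anchor the formula at $v_0$ (invoking part~(c) with $j=0$) and lower-bound the relevant maximum by $\newH(v_0^*,v_0)\geq \newH(v_s,v_0)$, whereas the paper anchors at $v_s$ (invoking part~(c) with $j=s$) and writes the equality $\newH(v_0,\newpi)=\newH(v_0,v_s)-\newH(\newpi,v_s)$, which implicitly relies on $v_0$ remaining $v_s$-pessimal in $\newG$ --- a fact that does follow from parts (a) and (b) but is left unspoken there. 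Your version sidesteps that implicit step, which is a small but genuine tidying of the argument.
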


Note that we have $s<d$ when the pessimal path $P$ is not a geodesic. 

\begin{proof}
Using Lemma \ref{lemma:caterpillarify-hitting-times-v2} (c) for $j=s$, we have
\begin{align*}
\newH(v_0, \newpi) - H(v_0, \pi) &=
\newH(v_0,v_s) - \newH(\newpi,v_s) - (H(v_0,v_s) - H(\pi, v_s))  \\
&=
(\newH(v_0,v_s) - H(v_0,v_s) ) + (H(\pi, v_s)  - \newH(\newpi,v_s)) \\
&=
-(\newH(\newpi,v_s) -H(\pi, v_s)  )> 0.
\end{align*}
Finally, we have
$$
\Tmix(\newG) \geq \newH(v_0, \pi^*) > H(v_0, \pi) = \Tmix(G).
$$
%\qed  
\end{proof}

\begin{lemma}
\label{lemma:caterpillarify-mixing-time}
Let $G \in \tree{n}{d}$ be a tree with pessimal path $P = \{ v_0, v_1, \ldots, v_s \}$, and leaf $y \in V(G \backslash P)$ that is not adjacent to $P$. Then there exists a caterpillar  $\newG \in \tree{n}{s}$, where $s \leq d$, such that
$$
\Tmix(\newG) \geq \newH(v_0, \newpi) > H(v_0,\pi) = \Tmix(G).
$$
\end{lemma}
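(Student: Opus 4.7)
The plan is to iterate the surgery $\sigma$ using the fixed pessimal path $P$ until no off-spine leaf remains. Starting from $G_0 = G$, while $G_i$ has a leaf $y_i \in V(G_i) \setminus V(P)$ not adjacent to $P$, set $G_{i+1} = \sigma(G_i, P; y_i)$. Termination follows from the potential $\Psi(G_i) := \sum_{v \notin V(P)} (d(v, V(P)) - 1) \geq 0$: since $y_i$ is a leaf of $G_i$, moving it does not alter any other vertex's distance to $V(P)$, and $\Psi$ drops by exactly $d(y_i, V(P)) - 1 \geq 1$ per step. Moreover, $P$ remains a leaf-to-leaf subpath throughout: if $y_i$'s closest $P$-vertex were $v_0$, then since $v_0$ is a leaf whose only neighbor is $v_1$, the path from $y_i$ to $v_0$ would pass through $v_1$, making $v_1$ the strictly closer $P$-vertex, a contradiction. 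Hence the attachment $v_{k_i}$ always lies in $\{v_1, \ldots, v_{s-1}\}$, so $v_0$ and $v_s$ remain leaves. The final tree $\newG = G_m$ is therefore a caterpillar with spine $P$, and $\newG \in \tree{n}{s}$.

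Next I track the path-relative quantity $\psi_i := H_{G_i}(v_0, v_s) - H_{G_i}(\pi_i, v_s)$, where $\pi_i$ is the stationary distribution on $G_i$. Lemma \ref{lemma:caterpillarify-hitting-times-v2}(a) gives $H_{G_{i+1}}(v_0, v_s) = H_{G_i}(v_0, v_s)$, and part (c) gives $H_{G_{i+1}}(\pi_{i+1}, v_s) < H_{G_i}(\pi_i, v_s)$, so $\psi_i$ is strictly increasing in $i$. At $i = 0$, the pessimal path hypothesis combined with Theorem \ref{thm:mixing-duality} yields $\psi_0 = H(v_0, v_s) - H(\pi, v_s) = H(v_0, \pi) = \Tmix(G)$. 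At $i = m$, concatenating an optimal $(v_0, \newpi)$-stopping rule with a subsequent walk until $v_s$ is first hit produces a valid $(v_0, v_s)$-stopping rule of expected length $\newH(v_0, \newpi) + \newH(\newpi, v_s)$, which is at least $\newH(v_0, v_s)$; rearranging gives $\newH(v_0, \newpi) \geq \psi_m$. Combining everything, $\Tmix(\newG) \geq \newH(v_0, \newpi) \geq \psi_m > \psi_0 = H(v_0, \pi) = \Tmix(G)$, exactly the chain asserted in the lemma.

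The main obstacle is that after even the first surgery, $P$ need no longer be the pessimal path of $G_1$, so Lemma \ref{lemma:caterpillarify-mixing-time-step1} does not iterate naively: its proof leans on the equality $H(v_0, \pi) = H(v_0, v_s) - H(\pi, v_s)$, which depends on $v_s$ being $v_0$-pessimal. The workaround is to use that equality only at step $i = 0$ and replace it for all later $i$ by the one-sided concatenation bound $H_{G_i}(v_0, \pi_i) \geq \psi_i$, which holds in every tree regardless of whether $v_s$ is still pessimal. The strict monotonicity of $\psi_i$ furnished by Lemma \ref{lemma:caterpillarify-hitting-times-v2} then propagates the strict gain in $\Tmix$ all the way to the caterpillar $\newG$.
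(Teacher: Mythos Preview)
Your proof is correct. Both you and the paper iterate the surgery $\sigma$ along the fixed path $P$ until the tree becomes a caterpillar with spine $P$; the difference lies only in how the strict increase of $H(v_0,\pi)$ is carried through the iteration.

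The paper asserts that ``$P$ is still the pessimal path'' after each step, citing parts (a) and (b) of Lemma~\ref{lemma:caterpillarify-hitting-times-v2}, and then re-applies Lemma~\ref{lemma:caterpillarify-mixing-time-step1} verbatim. What (a) and (b) actually give is that $v_s$ remains $v_0$-pessimal (since $\newH(v,v_0)\le H(v,v_0)$ for all $v$, with equality at $v=v_s$), not that $v_0$ still attains $\Tmix$. This is in fact enough, because the reversible-chain identity $H(u,v)-H(\pi,v)=H(v,u)-H(\pi,u)$ together with Theorem~\ref{thm:mixing-time} yields $H_{G_i}(v_0,\pi_i)=H_{G_i}(v_0,v_s)-H_{G_i}(\pi_i,v_s)$ at every step; the paper's argument is therefore sound, though the phrase ``still the pessimal path'' is somewhat loose.

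Your route sidesteps this entirely. By tracking $\psi_i=H_{G_i}(v_0,v_s)-H_{G_i}(\pi_i,v_s)$ directly and invoking the exact formula only at $i=0$ (where pessimality is given by hypothesis), you need at the final step only the triangle inequality $\newH(v_0,\newpi)\ge \newH(v_0,v_s)-\newH(\newpi,v_s)$ from concatenation of stopping rules. This is cleaner: it never asks whether $P$ remains pessimal, uses no hidden identity, and still delivers the exact chain of inequalities stated in the lemma. Your termination argument via the potential $\Psi$ and the verification that the attachment point avoids the endpoints $v_0,v_s$ are also details the paper leaves implicit.
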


\begin{proof}
Let $P=\{ v_0, v_1, \ldots, v_s \}$ be a pessimal path of $G$. We transplant each leaf that is not adjacent to $P$, one at a time, so that each leaf is now adjacent to $P$. After each leaf transplant,  $P$ is still the pessimal path due to 
Lemma \ref{lemma:caterpillarify-hitting-times-v2} (a) and (b). Meanwhile, the mixing time strictly increases by Lemma \ref{lemma:caterpillarify-mixing-time-step1}. We repeat this process until every leaf is adjacent to $P$. We now have a caterpillar $\newG \in \tree{n}{s}$, where $s \leq d$, such that $\Tmix(G) < \Tmix(\newG)$.
%\qed  
\end{proof}

\subsection{Phase Two: Caterpillar to Double Broom or Near Double Broom}

In this subsection, we show that if $G$ is a caterpillar, then can move leaves in pairs until we either have a double broom or a near double broom (see Definition~\ref{def:near-double-broom}). With each change, the mixing time increases. We will need the following notation for partitioning a caterpillar along its spine, as shown in  Figure \ref{fig:Vk-example}.

\begin{definition}
Consider a caterpillar $G$ with spine $P=\{v_0, v_1, \ldots, v_d\}$. For $0 \leq k \leq d$, define the set $V_k$ to be the vertex  $v_k$ and the leaves in $G \backslash P$ that are adjacent to it. 
\end{definition}

\begin{figure}[ht]

\begin{center}
\begin{tikzpicture}[scale=1.1]

\begin{scope}

\foreach \x in {0,2,5}
{
\draw[fill] (\x,0) circle (2pt);
\node[above] at (\x,0) {$v_{\x}$};
\draw (\x,0 ) ellipse (.25 and .55);
\node at (\x, 0.85) {$V_{\x}$};
}

\foreach \x in {1,3,4}
{
\draw[fill] (\x,0) circle (2pt);
\node[above] at (\x,0) {$v_{\x}$};
%\draw (\x,-.5 ) ellipse (.3 and 1.25);
\node at (\x, 0.85) {$V_{\x}$};
}

\draw (1,-.5) ellipse (.4 and 1.05);
\draw (3,-.5) ellipse (.5 and 1.05);
\draw (4,-.5) ellipse (.3 and 1.05);

\draw[thick] (0,0) -- (5,0);

\begin{scope}[shift={(3,0)}]

\foreach \x in {-74,-90,-104}
{
\draw[thick] (0,0) -- (\x:1);
\draw[fill] (\x:1) circle (2pt);
}

\end{scope}

\begin{scope}[shift={(1,0)}]

\foreach \x in {-82,-98}
{
\draw[thick] (0,0) -- (\x:1);
\draw[thick, fill] (\x:1) circle (2pt);
}

\end{scope}

\draw[thick] (4,0) -- (4,-1);
\draw[thick, fill] (4,-1) circle (2pt);

\end{scope}

\end{tikzpicture}
\end{center}

\caption{Partitioning a caterpillar in $\tree{12}{5}$ into sets $V_0, V_1, V_2, V_3, V_4, V_5$.}
\label{fig:Vk-example}
\end{figure}
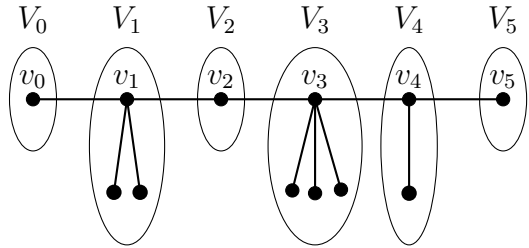

We are now ready to describe the effect of performing the tree surgeries shown in Figure \ref{fig:cat-to-double-broom}.

\begin{lemma}
\label{lemma:two-leaf-hitting-time}
Given a caterpillar $G$ with spine $P=\{v_0, v_1, \ldots, v_d\}$ that has leaves adjacent to $v_i$ and $v_j$ where $2\leq i \leq j \leq d-2$. Let $\newG = \tau(G,P; (i,i-1) \wedge (j,j+1))$, where $2 \leq i \leq j \leq d-2$. 

For $v \in V_k$, set $\triangle H(v,v_d) = \newH(v,v_d) - H(v,v_d)$. Then
$$
\triangle H(v, v_d) = \left\{
\begin{array}{cl}
0 & 0 \leq k \leq i-1 \mbox{ or } j+1 \leq k \leq d, \\
2+H(i-1,i) & v = x, \\
-2  & i \leq k \leq j \mbox{ and } v \notin \{x,y\}, \\
2-H(j,j+1) & v = y. \\
\end{array}
\right.
$$
\end{lemma}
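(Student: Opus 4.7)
The plan is to express every hitting time $H(v, v_d)$ as a sum of consecutive spine-edge hitting times and then track how each edge's contribution changes between $G$ and $\newG$. Two tools from the excerpt make this feasible: equation \eqref{eqn:adjhtime} gives $H(u, v) = 2|V_{u:v}| - 1$ for adjacent $u, v$, and the standard tree identity $H(u, v) = H(u, w) + H(w, v)$ holds whenever $w$ lies on the unique $u$-to-$v$ path. Together these reduce the lemma to a calculation about how $|V_{v_\ell : v_{\ell+1}}|$ changes between the two trees.

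First, I would catalog the edge-wise changes produced by the surgery. The only edges whose cross-sections change are $(v_{i-1}, v_i)$, which gains $x$, and $(v_j, v_{j+1})$, which loses $y$. By equation \eqref{eqn:adjhtime} this gives $\newH(v_{i-1}, v_i) = H(v_{i-1}, v_i) + 2$ and $\newH(v_j, v_{j+1}) = H(v_j, v_{j+1}) - 2$; every other spine edge and every leaf-to-spine edge is unchanged.

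Next, I would dispatch the four cases. For $v \in V_k$ with $v \notin \{x, y\}$, the decomposition $H(v, v_d) = c(v) + \sum_{\ell = k}^{d-1} H(v_\ell, v_{\ell+1})$ has the same range and the same constant $c(v) \in \{0, 1\}$ in $\newG$, so $\triangle H(v, v_d)$ collects only those $\pm 2$ edge changes with index $\ell \in [k, d-1]$: both contributions appear and cancel when $k \leq i - 1$; only the $-2$ appears when $i \leq k \leq j$; neither appears when $k \geq j + 1$. For $v \in \{x, y\}$, the summation range itself shifts by one because the leaf has moved, so in addition to the $\pm 2$ edge changes one must account for picking up (for $x$) or dropping (for $y$) a boundary edge whose hitting time has itself changed.

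The main obstacle will be exactly this bookkeeping in the $v = x$ and $v = y$ subcases, where the range shift and the edge change occur on the same affected edge and must be combined without double-counting. Once the signs and boundary terms are tracked consistently, the stated closed forms in the four cases of the lemma follow by direct substitution.
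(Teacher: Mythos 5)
Your approach --- decomposing $H(v, v_d)$ as a sum of consecutive spine-edge hitting times via equation \eqref{eqn:path-sum}, observing via equation \eqref{eqn:adjhtime} that the only spine edges whose hitting times change are $\newH(v_{i-1}, v_i) = H(v_{i-1}, v_i) + 2$ and $\newH(v_j, v_{j+1}) = H(v_j, v_{j+1}) - 2$, and then tallying which of these $\pm 2$ perturbations fall into each vertex's summation range --- is exactly the calculation the paper gestures at and leaves to the reader. Your dispatching of the $v \notin \{x, y\}$ cases is correct.

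However, you should not take it on faith that your flagged ``boundary bookkeeping'' for $v = x$ and $v = y$ will reproduce the printed closed forms; if you carry it out you will find that it does not. For $v = x$: in $G$ one has $H(x, v_d) = 1 + H(v_i, v_d)$, while in $\newG$ one has
$\newH(x, v_d) = 1 + \newH(v_{i-1}, v_i) + \newH(v_i, v_d) = 1 + \bigl(H(v_{i-1}, v_i) + 2\bigr) + \bigl(H(v_i, v_d) - 2\bigr)$,
the trailing $-2$ coming from index $j \in [i, d-1]$; hence $\triangle H(x, v_d) = H(v_{i-1}, v_i)$, not $2 + H(v_{i-1}, v_i)$. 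For $v = y$: one has $H(y, v_d) = 1 + H(v_j, v_{j+1}) + H(v_{j+1}, v_d)$ and $\newH(y, v_d) = 1 + \newH(v_{j+1}, v_d) = 1 + H(v_{j+1}, v_d)$ (no changed edge lies beyond $v_{j+1}$), so $\triangle H(y, v_d) = -H(v_j, v_{j+1})$, not $2 - H(v_j, v_{j+1})$. You can confirm both on the spine $v_0, \ldots, v_4$ with $x$ and $y$ both attached to $v_2$ (so $i = j = 2$): one computes $H(x, v_4) = 21$, $\newH(x, v_4) = 24$, and $H(v_1, v_2) = 3$, giving $\triangle H(x, v_4) = 3 = H(v_1, v_2)$ rather than $5$; likewise $\triangle H(y, v_4) = -9 = -H(v_2, v_3)$ rather than $-7$. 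Notably, step~6 of the paper's own proof of Lemma~\ref{lemma:two-leaf-pi-access-time} arrives at $-H(v_j, v_{j+1})$, consistent with the corrected value, while its step~3 repeats the $2 + H(v_{i-1}, v_i)$ slip by implicitly treating $\newH(v_i, v_d)$ as equal to $H(v_i, v_d)$. So your strategy is sound and is precisely what the paper intends, but the lemma's displayed entries for $v = x$ and $v = y$ are each off by $+2$, and your planned ``direct substitution'' would surface that discrepancy rather than confirm the statement as written.
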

% Note that for y, we get +2 because we moved x.

\begin{proof}
All four cases follow from equation \eqref{eqn:hitting-time} and the fact that 
\begin{equation}
\label{eqn:path-sum}
H(v_s, v_t) = H(v_s, v_{s+1})+ H(v_{s+1}, v_{s+2}) +\cdots + H(v_{t-1}, v_t) \quad \mbox{for } 0 \leq s < t \leq d.
\end{equation} 
We leave the elementary calculations to the reader.
%\qed  
\end{proof}

\begin{lemma}
\label{lemma:two-leaf-pi-access-time}
Given a caterpillar $G$ with spine $P=\{v_0, v_1, \ldots, v_d\}$ and leaf $x$ adjacent to $v_i$ and leaf $y$ adjacent to $v_j$, where $2 \leq i \leq j \leq d-2$. Let $\newG = \tau(G,P; (i,i-1) \wedge (j,j+1))$.
Then $\newH(\newpi,v_d) < H(\pi,v_d)$ and $\newH(\newpi,v_0) < H(\pi,v_0)$.
\end{lemma}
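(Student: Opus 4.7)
The plan is to compute $2(n-1)\bigl[\newH(\newpi, v_d) - H(\pi, v_d)\bigr]$ in closed form and show it is strictly negative. Using $\pi_v = \deg(v)/(2(n-1))$ and similarly for $\newpi$, I would split the difference as
\[
A_1 + A_2 \;=\; \sum_v \bigl(\newdeg(v)-\deg(v)\bigr)\, H(v,v_d) \;+\; \sum_v \newdeg(v)\bigl(\newH(v,v_d)-H(v,v_d)\bigr),
\]
isolating the degree shift from the hitting-time shift. Only the four spine vertices $v_{i-1}, v_i, v_j, v_{j+1}$ contribute to $A_1$, with degree changes $+1, -1, -1, +1$, and telescoping along the spine via \eqref{eqn:path-sum} collapses this to $A_1 = H(v_{i-1},v_i) - H(v_j,v_{j+1})$. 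The sum $A_2$ is read off directly from Lemma~\ref{lemma:two-leaf-hitting-time}: since $\newdeg(x)=\newdeg(y)=1$, the boundary contributions from $x$ and $y$ are $2+H(v_{i-1},v_i)$ and $2-H(v_j,v_{j+1})$, while every other vertex of the middle region contributes $-2\newdeg(v)$, giving $A_2 = 4 + H(v_{i-1},v_i) - H(v_j,v_{j+1}) - 2N$ where $N = \sum_{v \in \bigcup_{k=i}^j V_k,\, v \neq x, y} \newdeg(v)$.

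The crux will be a short algebraic collapse. Writing $L_i = \sum_{k<i}|V_k|$ and $M = \sum_{k=i}^j|V_k|$, equation \eqref{eqn:adjhtime} gives $H(v_{i-1},v_i) = 2L_i - 1$ and $H(v_j,v_{j+1}) = 2(L_i+M) - 1$. A direct count---using that the interior spine vertices $v_k$ have degree $|V_k|+1$ in $G$, that $\newdeg$ subtracts $1$ at both $v_i$ and $v_j$, and that all other middle vertices are leaves of degree $1$---gives $N = 2M - 4$. Plugging in, the $L_i$ terms cancel and the whole expression simplifies to $12 - 8M$. Since $x$ is adjacent to $v_i$ and $y$ to $v_j$, we have $|V_i|, |V_j| \geq 2$ when $i < j$ and $|V_i| \geq 3$ when $i = j$, so $M \geq 2$ in either case, and hence $2(n-1)\bigl[\newH(\newpi,v_d)-H(\pi,v_d)\bigr] = 12 - 8M \leq -4 < 0$.

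The symmetric statement $\newH(\newpi,v_0) < H(\pi,v_0)$ follows by reversing the spine. The relabeling $v_k \mapsto v_{d-k}$ converts the surgery $\tau(G,P;(i,i-1)\wedge(j,j+1))$ into one of the same form on the reversed caterpillar, with new indices $d-j \leq d-i$ still in $[2,d-2]$, and with the role of $v_d$ now played by $v_0$, so the preceding calculation applies verbatim. The main obstacle in the plan is the bookkeeping inside $A_2$: correctly separating the leaf weights $\newdeg(x)=\newdeg(y)=1$ from the middle-region weights $\newdeg(v)$, and verifying that the apparent complexity of the expression does in fact cancel to the clean value $12 - 8M$.
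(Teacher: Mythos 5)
Your decomposition into $A_1 = \sum_v(\newdeg(v)-\deg(v))H(v,v_d)$ and $A_2 = \sum_v \newdeg(v)\,\triangle H(v,v_d)$ is a genuinely different and cleaner route than the paper's: the paper expands $\sum_v[\newdeg(v)\newH(v,v_d)-\deg(v)H(v,v_d)]$ over eight vertex types and stops at the partially reduced expression $4H(v_{i-1},v_i) - 4H(v_j,v_{j+1}) + 4 - 2\deg(v_i)$, whereas your $L_i$, $M$ bookkeeping collapses everything to a single scalar, and your count $N = 2M-4$ checks out in both the $i<j$ and $i=j$ cases. The one issue: the values $\triangle H(x,v_d)=2+H(v_{i-1},v_i)$ and $\triangle H(y,v_d)=2-H(v_j,v_{j+1})$ that you read off Lemma~\ref{lemma:two-leaf-hitting-time} are each off by $+2$ as stated there. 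Indeed, the lemma's own first case gives $\triangle H(v_{i-1},v_d)=0$; combined with $\newH(x,v_d)=1+\newH(v_{i-1},v_d)$ and $H(x,v_d)=1+H(v_i,v_d)$ this forces $\triangle H(x,v_d)=H(v_{i-1},v_d)-H(v_i,v_d)=H(v_{i-1},v_i)$, and symmetrically $\triangle H(y,v_d)=-H(v_j,v_{j+1})$ (small examples confirm this). With the corrected values your $A_2$ loses its $+4$, giving $A_2 = -2M - 2(2M-4) = 8-6M$ and $A_1 + A_2 = 8-8M$ rather than $12-8M$. The conclusion is unaffected: in fact $M\ge 3$ always (you only claimed $M\ge 2$, but $|V_i|,|V_j|\ge 2$ when $i<j$, and $|V_i|\ge 3$ when $i=j$), so $8-8M\le -16<0$. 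Your spine-reversal argument for the $v_0$ statement is correct, since the reversed surgery has the same form with indices $d-j\le d-i$ still inside $[2,d-2]$. For the record, the paper's own eight-case computation also contains arithmetic slips in its cases~4 and~5 (they do not follow from Lemma~\ref{lemma:two-leaf-hitting-time} even as stated), so its final displayed expression is also not the true value of $2|E|\bigl(\newH(\newpi,v_d)-H(\pi,v_d)\bigr)$; the true closed form is $8-8M$.
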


\begin{proof}
We prove the statement for $v_d$; the proof for $v_0$ follows by symmetry.
We show that
$$
2|E| (\newH(\newpi, v_d) - H(\pi,v_d) )= \sum_{v \in V} \newdeg(v) \newH(v,v_d) - \sum_{v 
\in V} \deg(v) H(v,v_d) < 0.
$$
We calculate $ \newdeg(v) \newH(v,v_d) - \deg(v) H(v,v_d)$ for eight types of vertices. We rely on  Lemma \ref{lemma:two-leaf-hitting-time} and  equations  \eqref{eqn:hitting-time} and  \eqref{eqn:path-sum} to justify the calculations below.

\begin{enumerate}
\item For $v \in V_k$ where $0 \leq k \leq i-1 \mbox{ or } j+1 \leq k \leq d$ and $v \notin \{ v_{i-1}, v_{j+1} \}$,
$$
 \newdeg(v) \newH(v,v_d) -  \deg(v) H(v,v_d) =  \deg(v) H(v,v_d) -  \deg(v) H(v,v_d) = 0.
$$

\item For $v = v_{i-1}$, we have
\begin{align*}
&\quad \newdeg(v_{i-1})\newH(v_{i-1},v_d) -  \deg(v_{i-1})H(v_{i-1},v_d) \\
 &=
(\deg(v_{i-1})+1) H(v_{i-1},v_d) -  \deg(v_{i-1}) H(v_{i-1},v_d) \\
 &=
 H(v_{i-1},v_d).   
\end{align*}

\item For $v=x$, we have
\begin{align*}
 \newdeg(x) \newH(x,v_d) -  \deg(x) H(x,v_d) 
 &=
 1 + \newH(v_{i-1},v_d) - (1 + H(v_i,d))  \\
 &=
\newH(v_{i-1},v_i) + \newH(v_i,v_d) -  H(v_i,v_d) \\
&=
 2 + H(v_{i-1},v_i). 
\end{align*}

\item For $v=v_i$, we have
\begin{align*}
& \quad \newdeg(v_{i}) \newH(v_{i},v_d) -  \deg(v_{i}) H(v_{i},v_d) \\
 &=
(\deg(v_{i})-1)H(v_{i},v_d) -  \deg(v_{i})H(v_{i},v_d) \\
 &=
-  H(v_{i},v_d). 
\end{align*}

\item For $v \in V_k$ where $i \leq k \leq j$ and $v \notin \{v_i, x, v_j, y \}$, we have
\begin{align*}
 \newdeg(v) \newH(v,v_d) -  \deg(v) H(v,v_d) 
 &=  \deg(v) ( H(v,v_d) -2 - H(v,v_d))  \\
 &= -2 \deg(v).
\end{align*}
When we sum all of these values, we obtain
$$
- 2\sum_{k=i}^j \sum_{v \in V_k  \backslash \{ x, v_i, v_j, y\}}  \!\!\!\!\!\!\!\!\!\!\! \deg(v)
=
 - 2 \big(H(v_j, v_{j+1}) - H(v_{i-1}, v_i) - \deg(v_j)  + \deg(v_i) \big).
$$

\item For $v = y$, we have
\begin{align*}
 & \quad \newdeg(y) \newH(y,v_d) -  \deg(y) H(y,v_d) \\
 &=
  1 + \newH(v_{j+1},v_d) - (1 + H(v_j,d)) \\
 &=
\newH(v_{j+1},v_d) - H(v_j,v_{j+1}) -  H(v_{j+1},v_d)  =
-   H(v_{j},v_{j+1}). 
\end{align*}

\item For $v=v_j$, we have
\begin{align*}
& \quad \newdeg(v_{j}) \newH(v_{j},v_d) -  \deg(v_{j}) H(v_{j},v_d)  \\
 &=
 (\deg(v_{j})-1)(\newH(v_{j},v_{j+1}) + \newH(v_{j+1}, v_d) )-  \deg(v_{j}) H(v_{j},v_d) 
 \\
 &=
(\deg(v_{j})-1)(H(v_{j},v_{j+1}) -2 + H(v_{j+1}, v_d) )-  \deg(v_{j}) H(v_{j},v_d)  \\
 &=
 2 - 2\deg(v_{j}) -H(v_j,v_d).
\end{align*}

\item For $v=v_{j+1}$, we have
\begin{align*}
& \quad \newdeg(v_{j+1}) \newH(v_{j+1},v_d) -  \deg(v_{j+1}) H(v_{j+1},v_d)  \\
 &=
(\deg(v_{j+1})+1)\newH(v_{j+1}, v_d) -  \deg(v_{j+1}) 
 H(v_{j+1},v_d) 
 \\
 &=
 H(v_{j+1},v_d).
\end{align*}

\end{enumerate}

The overall change $\newH(\newpi, v_d) - H(\pi,v_d) $ is $1/2|E|$ times the sum of these eight terms. This sum is 
\begin{align*}
&\quad 
H(v_{i-1},v_d) 
+ 2 + H(v_{i-1},v_i) 
- H(v_{i},v_d)  \\
& \qquad
-2 H(v_j, v_{j+1}) +2 H(v_{i-1}, v_i) +2 \deg(v_j)  -2 \deg(v_i) \\
& \qquad
-   H(v_{j},v_{j+1}) 
+2 -2 \deg(v_j) - H(v_{j},v_d)
+H(v_{j+1},v_d)
\\
&= 
H(v_{i-1},v_d) 
- H(v_{i},v_d) 
+3 H(v_{i-1}, v_i) 
 - H(v_{j},v_d)
+H(v_{j+1},v_d)  \\
& \qquad
 -3 H(v_j, v_{j+1}) +4 -2 \deg(v_i)  
\\
&= 
4 H(v_{i-1}, v_i) 
-4 H(v_j, v_{j+1})  + 4 -2 \deg(v_i)  < 0.
\end{align*}
This confirms that $\newH(\newpi, v_d) - H(\pi,v_d)$.
%\qed  
\end{proof}

\begin{corollary}
\label{cor:two-leaf-mixing-time}
Given a caterpillar $G$ with spine $P=\{v_0, v_1, \ldots, v_d\}$ with leaf $x$ adjacent to $v_i$ and leaf $y$ adjacent to $v_j$, where $2 \leq i \leq j \leq n-2$. Let $\newG = \tau(G,P; (i,i-1) \wedge (j,j+1))$.
Then $\Tmix(\newG) > \Tmix(G)$.
\end{corollary}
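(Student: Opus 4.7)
The plan is to leverage Lemma \ref{lemma:two-leaf-pi-access-time} (which supplies $\newH(\newpi, v_0) < H(\pi, v_0)$) together with the mixing time formula of Theorem \ref{thm:mixing-time} and its duality form in Theorem \ref{thm:mixing-duality}. The crucial structural observation needed is that the surgery $\tau(G, P; (i,i-1) \wedge (j,j+1))$ preserves the hitting time $H(v_d, v_0)$ between the two spine endpoints.

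To see this invariance, expand $H(v_d, v_0) = \sum_{k=0}^{d-1} H(v_{k+1}, v_k)$ via equation \eqref{eqn:path-sum} and write each term as $H(v_{k+1}, v_k) = 2|V_{v_{k+1}:v_k}| - 1$ using equation \eqref{eqn:adjhtime}. Moving $x$ from $v_i$ to $v_{i-1}$ removes $x$ from $V_{v_i:v_{i-1}}$ and so decreases $H(v_i, v_{i-1})$ by $2$; meanwhile, moving $y$ from $v_j$ to $v_{j+1}$ adds $y$ to $V_{v_{j+1}:v_j}$ and so increases $H(v_{j+1}, v_j)$ by $2$. Every other consecutive hitting time is unchanged, so the two $\pm 2$ contributions cancel and $\newH(v_d, v_0) = H(v_d, v_0)$.

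With this invariance in hand, assume (as is the case throughout Phase Two, inherited inductively from Phase One) that $v_0, v_d$ is a pessimal pair of $G$. Then Theorem \ref{thm:mixing-duality} gives $\Tmix(G) = H(v_0, \pi) = H(v_d, v_0) - H(\pi, v_0)$. In $\newG$, letting $z$ denote a $v_0$-pessimal vertex, Theorem \ref{thm:mixing-time} yields $\newH(v_0, \newpi) = \newH(z, v_0) - \newH(\newpi, v_0)$. Since $\newH(z, v_0) \geq \newH(v_d, v_0) = H(v_d, v_0)$ by the invariance, and $\newH(\newpi, v_0) < H(\pi, v_0)$ by Lemma \ref{lemma:two-leaf-pi-access-time}, chaining these inequalities gives $\Tmix(\newG) \geq \newH(v_0, \newpi) \geq H(v_d, v_0) - \newH(\newpi, v_0) > H(v_d, v_0) - H(\pi, v_0) = \Tmix(G)$.

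The main obstacle is justifying the assumption that $v_0, v_d$ is a pessimal pair of $G$, since the corollary is stated purely in terms of the spine. In the broader proof of Theorem \ref{thm:broom-max-mixing}, this is an invariant of the evolution: Phase One ends with the spine endpoints forming a pessimal pair (per Lemma \ref{lemma:caterpillarify-mixing-time}), and the argument above applies symmetrically to $v_d$, so both spine endpoints continue to dominate the mixing time throughout Phase Two.
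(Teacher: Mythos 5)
Your proof is essentially the paper's: both invoke Lemma \ref{lemma:two-leaf-pi-access-time} together with the invariance of the spine-endpoint hitting time under the surgery (which the paper reads off from Lemma \ref{lemma:two-leaf-hitting-time}, and which you re-derive directly from equations \eqref{eqn:adjhtime} and \eqref{eqn:path-sum} via the cancelling $\pm 2$ contributions), then chain the identical inequalities through Theorem \ref{thm:mixing-time}, merely conditioning on $v_0$ where the paper conditions on $v_d$. Your explicit flagging of the hidden assumption that $v_0, v_d$ form a pessimal pair is a fair observation, but the paper's own one-line proof relies on it silently as well, so this is not a divergence in method.
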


\begin{proof}
By Lemma \ref{lemma:two-leaf-hitting-time}, we have $\newH(v_0,v_d) = H(v_0,v_d)$ and by Lemma \ref{lemma:two-leaf-pi-access-time}, we have
$\newH(\newpi,v_d) < H(\pi,v_d)$. Therefore
$$
\Tmix(\newG) \geq \newH(v_0, v_d) - \newH(\newpi, v_d)
> H(v_0, v_d) - H(\pi, v_d) = \Tmix(G).
$$
%\qed  
\end{proof}

\begin{lemma}
\label{lemma:to-appendix-broom}
Given a caterpillar $G \in \tree{n}{d}$, with at least two leaves adjacent to spine vertices $\{ v_2, v_3, \ldots, v_{d-2}\}$. Then there is another caterpillar $\newG \in \tree{n}{d}$ such that $\Tmix(G) < \Tmix(\newG$), where $\newG$ is either a double broom or  a near double broom. 
\end{lemma}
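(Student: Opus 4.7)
The plan is to iteratively apply the pair-push surgery from Corollary~\ref{cor:two-leaf-mixing-time}, which strictly increases the mixing time, until its hypothesis fails. At each stage, as long as the current caterpillar has at least two off-spine leaves attached at interior spine positions in $\{v_2,\ldots,v_{d-2}\}$, I select two such leaves $x$ and $y$ adjacent to $v_i$ and $v_j$ with $2 \le i \le j \le d-2$ (allowing $x \ne y$ at a common vertex $v_i$ when only one interior position carries off-spine leaves), and set $\newG := \tau(G, P; (i,i-1)\wedge(j,j+1))$. The surgery only relocates off-spine leaves and leaves $v_0$ and $v_d$ untouched, so $\newG \in \tree{n}{d}$ is again a caterpillar with spine $P$, and Corollary~\ref{cor:two-leaf-mixing-time} gives $\Tmix(\newG) > \Tmix(G)$.

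The main obstacle is verifying that this iteration terminates, because a single surgery need not reduce the number of interior-attached leaves (for instance, moving a leaf from $v_3$ to $v_2$ keeps it interior when $d \ge 4$). To control this, I track the spread potential
\[
\Phi(G) \;=\; \sum_{\substack{z \in V(G)\setminus V(P) \\ z \text{ a leaf}}} \left(k(z) - \tfrac{d}{2}\right)^2,
\]
where $k(z)$ denotes the spine index of the unique spine neighbor of $z$. A short calculation for $\tau(G,P;(i,i-1)\wedge(j,j+1))$ gives
\[
\Phi(\newG) - \Phi(G) \;=\; \bigl[-2(i-\tfrac{d}{2})+1\bigr] + \bigl[2(j-\tfrac{d}{2})+1\bigr] \;=\; 2(j-i)+2 \;\ge\; 2,
\]
so $\Phi$ strictly increases by at least $2$ at each step. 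Since $\Phi$ is bounded above by $(n-d-1)(d-2)^2/4$, the process must terminate after finitely many surgeries.

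When the process halts, the terminal caterpillar $\newG \in \tree{n}{d}$ has at most one off-spine leaf adjacent to an interior position in $\{v_2,\ldots,v_{d-2}\}$; every other off-spine leaf hangs from $v_1$ or $v_{d-1}$. If there is no interior off-spine leaf, then $\newG$ is a double broom in $\double{n}{d}$; if exactly one such leaf remains, removing it yields a double broom, making $\newG$ a near double broom in the sense of Definition~\ref{def:near-double-broom}. Chaining the strict increases of $\Tmix$ across all surgeries gives $\Tmix(\newG) > \Tmix(G)$, completing the proof.
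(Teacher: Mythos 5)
Your argument follows exactly the paper's approach: iteratively apply Corollary~\ref{cor:two-leaf-mixing-time} until at most one off-spine leaf remains at an interior spine position. The paper omits a termination argument and simply asserts the process ends; your potential $\Phi$, which strictly increases by at least $2$ per surgery while being bounded above, fills this gap cleanly (and your worry that the raw count of interior-attached leaves is not monotone under the surgery is legitimate), so your proof is if anything more complete than the paper's.
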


\begin{proof}
By Corollary \ref{cor:two-leaf-mixing-time}, we can perform tree surgery $\tau(G,P; (i,i-1) \wedge (j,j+1))$ where $2 \leq i \leq j \leq d-2$ to obtain $\newG$ with $\Tmix(\newG) > \Tmix(G)$. We can repeat this process as long as there are at least two leaves adjacent to $\{ v_2, v_3, \ldots, v_{d-2}\}$, and the mixing time increases with each surgery. When we are done we either have a double broom, or a near double broom with one additional leaf adjacent to $\{ v_2, v_3, \ldots, v_{d-2}\}$.
%\qed  
\end{proof}

\subsection{Phase Three: Creating a Balanced Double Broom}

In this section, we show that when we have a near double broom or an  unbalanced double broom, then we can move leaves to the smaller end of the double broom, and increase the mixing time. Without loss of generality, our graph has spine $\{v_0, v_1, \ldots, v_d \}$, and there are at least as many  leaves adjacent to $v_1$ as there are to $v_{d-1}$. 

\begin{lemma}
\label{lemma:near-double-broom}
Let $G \in \tree{n}{d}$ be a near double broom with spine $v_0, v_1, \ldots, v_d$, where $\deg(v_1) \geq \deg(v_{n-1})$, and with one more leaf $x$ incident with $v_i$ where $2 \leq i \leq  d-2$. Let $\newG$ be the double broom obtained via the leaf transplant $\tau(G,P; (i,d-1))$. Then 
$\Tmix(\newG) > \Tmix(G)$. 
\end{lemma}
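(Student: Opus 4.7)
The plan is to identify both $G$ and $\newG$ as having the pair $\{v_0, v_d\}$ as their mutually pessimal pair (in the sense of Theorem~\ref{thm:mixing-duality}), so that
\[\Tmix(G) = H(v_0, v_d) - H(\pi, v_d) \quad \text{and} \quad \Tmix(\newG) = \newH(v_0, v_d) - \newH(\newpi, v_d),\]
and then to verify $\Tmix(\newG) > \Tmix(G)$ by direct calculation. For $\newG$ this is exactly the identification used in the proof of Lemma~\ref{lemma:double-broom-mixing-time}. For $G$, writing $L$ for the number of leaves at $v_1$ and $R$ for the number at $v_{d-1}$, repeated use of equation~\eqref{eqn:adjhtime} along the spine yields
\[H(v_0, v_d) = d^2 + 2L(d-1) + 2R - 2i, \quad H(v_d, v_0) = d^2 - 2d + 2R(d-1) + 2L + 2i,\]
together with $H(v_i, v_d) = d^2 - i^2 - 2 + 2L(d-i) + 2R$. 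Two direct identities
\[H(v_0, v_d) - H(x, v_d) = (i-1)(2L + i - 1) > 0 \quad\text{and}\quad H(v_d, v_0) - H(x, v_0) = (d-1-i)(d-1-i + 2R) > 0\]
show $x$ is neither $v_d$-pessimal nor $v_0$-pessimal; since pessimal vertices in trees are always leaves and leaves at $v_1$ (resp.\ $v_{d-1}$) are indistinguishable from $v_0$ (resp.\ $v_d$) in every hitting time, Theorem~\ref{thm:mixing-duality} pins down the mutually pessimal pair of $G$ as $\{v_0, v_d\}$.

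With the pessimal pair fixed, I would write
\[\Tmix(\newG) - \Tmix(G) = [\newH(v_0, v_d) - H(v_0, v_d)] + [H(\pi, v_d) - \newH(\newpi, v_d)].\]
Lemma~\ref{lemma:double-broom-hitting-times} applied to $\newG$ (with $r = R+1$) gives $\newH(v_0, v_d) - H(v_0, v_d) = -2(d-1-i)$. For the $\pi$-access-time difference, I would compare $H(\cdot, v_d)$ with $\newH(\cdot, v_d)$ vertex by vertex: the spine differences satisfy $H(v_k, v_d) - \newH(v_k, v_d) = 2(d-1-i)$ for $0 \leq k \leq i$, equal $2(d-1-k)$ for $i+1 \leq k \leq d-1$, and vanish at $v_d$. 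The remaining contributions to $2(n-1)[H(\pi, v_d) - \newH(\newpi, v_d)]$ come from the degree change at $v_i$ (from $3$ to $2$), the degree change at $v_{d-1}$ (from $R+1$ to $R+2$), and the drop in $H(x, v_d)$ when $x$ moves from $v_i$ to $v_{d-1}$ (from $1 + H(v_i, v_d)$ down to $2n - 2$).

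Collecting all pieces and using $n = d + L + R$, after substantial cancellation the difference collapses to
\[\Tmix(\newG) - \Tmix(G) = \frac{2(d - 1 - i)(i - 1 + L - R)}{n - 1},\]
which is strictly positive since $i \leq d - 2$ gives $d - 1 - i \geq 1$ while the hypotheses $i \geq 2$ and $L \geq R$ together give $i - 1 + L - R \geq 1$.

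The main obstacle is the bookkeeping in the $\pi$-access-time comparison, which splits into many pieces (two spine regions, two leaf groups, the two vertices whose degrees change, and the moved leaf $x$). The calculation pays off with the transparent final factorization: the factor $d - 1 - i$ records that $x$ began strictly inside the spine, while $i - 1 + L - R$ records that $v_{d-1}$ is the smaller end of the broom.
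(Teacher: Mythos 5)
Your proposal is correct, and it takes a genuinely different route to the key inequality than the paper does. The paper's proof does not bother to pin down the pessimal pair of $G$ explicitly (it tacitly uses $\{v_0,v_d\}$); it writes $\newH(\newpi,v_d)-H(\pi,v_d)$ as $\frac{1}{2|E|}$ times a six-term vertex-by-vertex sum, simplifies that sum symbolically, and then reduces the desired inequality to the combinatorial bound $|E|\leq H(v_i,v_{i+1})+d-i-3$, which it verifies by casework on $|L|$ (the size of the left side of the cut at edge $(v_i,v_{i+1})$). You instead (i) justify the pessimal-pair identification up front via the two identities showing $x$ is neither $v_0$- nor $v_d$-pessimal, which is a nice piece of rigor the paper skips, and (ii) aim directly for a closed-form expression
\[
\Tmix(\newG)-\Tmix(G)=\frac{2(d-1-i)(i-1+L-R)}{n-1},
\]
whose positivity is transparent from $i\geq 2$, $i\leq d-2$ and $L\geq R$. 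I checked this factorization numerically in several instances and it agrees with the mixing-time formulas in Section 3, so the claimed identity appears correct. The trade-off is that your approach buys a cleaner and more informative endpoint (an exact formula for the increment, rather than an inequality reached through a case split), but the burden is in the bookkeeping: you gesture at the seven or so vertex classes whose contributions must be tracked without writing them out, whereas the paper's six-case display, while ending in a weaker bound, carries out that accounting explicitly. If you fill in the vertex-class computation (a calculation of about the same length as the paper's display) your argument is complete and arguably preferable, since the factored form makes the roles of $d-1-i$ and $L-R$ conceptually transparent in exactly the way you describe.
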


\begin{proof}
First, we observe that
\begin{equation}
\label{eqn:near-double-broom1}
\newH(v_0, v_d) - H(v_0,v_d) = -2(d-i-1).
\end{equation}
by equation \eqref{eqn:hitting-time}, since we have moved leaf $x$ a distance of $d-i-1$ closer to $v_d$. By equation \eqref{eqn:mixing-time-formula}, we will have $\Tmix(\newG) > \Tmix(G)$ if and only if 
\begin{equation}
\label{eqn:near-double-broom2}
 \newH(v_0, v_d) - H(v_0,v_d) > \newH(\newpi, v_d) - H(\pi,v_d) = 
\sum_{v \in V}  \big( \newpi_v \newH(v,v_d) - \pi_v H(v,v_d) \big).
\end{equation} 
 It will be more convenient to multiply by $2|E|$ so that the summands on the right hand side become
 $\newdeg(v) \newH(v,v_d) - \deg(v) H(v,v_d)$. There are six different cases for $v \in V$.

\begin{enumerate}
\item For $v \in V_k$ for $0 \leq k \leq i$ and $v \notin \{ v_i, x\}$, we have
\begin{align*}
&\quad
 \newdeg(v) \newH(v,v_d) - \deg(v) H(v,v_d) \\
&= 
\deg(v) (H(v,v_d) - 2(d-i-1)) -\deg(v) H(v,v_d) \\
&= -2 \deg(v)(d-i-1).
\end{align*}
Summing over $V_k$ where $0 \leq k \leq i$ and $v \notin \{ v_i, x\}$ gives
\begin{align*}
&\quad -2(d-i-1) (H(v_i, v_{i+1}) - \deg(v_i) - \deg(x)) \\
&= -2(d-i-1) (H(v_i, v_{i+1}) - \deg(v_i) - 1).
\end{align*}

\item For $v=v_i$, we have
\begin{align*}
&\quad
 \newdeg(v_i) \newH(v_i,v_d) - \deg(v) H(v_i,v_d) \\
 &=
 (\deg(v_i)-1) (H(v_i,v_d) -2(d-i-1)) -\deg(v_i) H(v_i,v_d) \\
 &= - H(v_i,v_d) -2(d-i-1) (\deg(v_i)-1).
\end{align*}

\item For $v=x$, we have
\begin{align*}
 \newdeg(x) \newH(x,v_d) - \deg(x) H(x,v_d) 
 &=
 (1+\newH(v_{d-1},v_d)) - (1+H(v_i,v_d)) \\
 &= H(v_{d-1},v_d))- H(v_i,v_d) \\
 &= -H(v_i, v_{d-1}).
\end{align*}

\item For $v \in V_k$ where $i+1 \leq k \leq d-2$, we have
\begin{align*}
 & \quad \newdeg(v) \newH(v,v_d) - \deg(v) H(v,v_d)  \\
 &=
\deg(v) (H(v,v_d)- 2( d-k-1)) - \deg(v) H(v,v_d) \\
&=- 2( d-k-1) \deg(v) =- 4( d-k-1)  .
\end{align*}
Summing over  $i+1 \leq k \leq d-2$ yields
\begin{align*}
&\quad -4 \sum_{k=i+1}^{d-2}  (d-1) + 4 \sum_{k=i+1}^{d-2} k =-2  (d-i-2) (d-i-1).
\end{align*}

\item For $v = v_{d-1}$, we have
\begin{align*}
&\quad
\newdeg(v_{d-1}) \newH(v_{d-1},v_d) - \deg(v_{d-1}) H(v_{d-1},v_d)  \\
&=
(\deg(v_{d-1})+1) H(v_{d-1},v_d)   - \deg(v_{d-1}) H(v_{d-1},v_d)   \\
&=
H(v_{d-1},v_d).
\end{align*}

\item For $v \in V_{d-1} \backslash \{ v_{d-1} \}$, we have
$$
\newdeg(v) \newH(v,v_d) - \deg(v) H(v,v_d) =0.
$$

\end{enumerate}

The overall change $\newH(\newpi, v_d) - H(\pi,v_d) $ is $1/2|E|$ times the sum of these six terms. This sum is 
\begin{align*}
&\quad
-2(d-i-1) (H(v_i, v_{i+1}) - \deg(v_i) - 1)
- H(v_i,v_d) 
\\
&\qquad
 -2(d-i-1) (\deg(v_i)-1)
-H(v_i, v_{d-1})
\\
&\qquad
-2  (d-i-2) (d-i-1)
+ H(v_{d-1}, v_d)
\\
&=
-2(d-i-1) (H(v_i, v_{i+1})  - 2)
- 2H(v_i,v_{d-1}) 
-2  (d-i-2) (d-i-1) 
\\
&=
-2(d-i-1) \bigg(H(v_i, v_{i+1})  +d-i- 4 \bigg)
- 2H(v_i,v_{d-1}) .
\end{align*}

Multiplying inequality \eqref{eqn:near-double-broom2} by $-2|E|$ and using equation \eqref{eqn:near-double-broom1}, we must show that
% $$
% -4|E|(d-i-1) > -2(d-i-1) \bigg(H(v_i, v_{i+1})  +d-i- 4 \bigg) - 2H(v_i,v_{d-1})
% $$
% or more intuitively,
\begin{equation}
\label{eqn:throw-leaf}
4|E|(d-i-1) \leq 2(d-i-1) \bigg(H(v_i, v_{i+1})  +d-i- 4 \bigg) + 2H(v_i,v_{d-1}).
\end{equation}
Observe that we can write $H(v_i,v_{d-1})$ in terms of $H(v_i,v_{i+1})$ as follows:
\begin{align*}
H(v_i,v_{d-1}) &= \sum_{k=i}^{d-2} H(v_k,v_{k+1}) 
=
\sum_{k=0}^{d-i-2}  \left( H(v_i,v_{i+1}) + 2k \right) \\
&= (d-i-1) H(v_i,v_{i+1}) + 2\sum_{k=0}^{d-i-2} k \\
&= (d-i-1) H(v_i,v_{i+1}) + (d-i-2)(d-i-1).
\end{align*}
So we can divide our desired inquality \eqref{eqn:throw-leaf} by $2(d-i-1)$ to obtain the equivalent condition
\begin{align}
\nonumber
2|E| &\leq H(v_i,v_{i+1}) + d-i-4 + H(v_i,v_{i+1}) + d-i-2 \\
\label{eqn:throw-leaf2}
 |E| &\leq H(v_i,v_{i+1}) +d-i-3.
\end{align}
Now let's explain why  inequality \eqref{eqn:throw-leaf2} holds. 
First, we define
$$
L = \bigcup_{k=0}^i V_k = V_{v_i: v_{i+1}}
$$
to be the vertices to the left of spine vertex $v_{i+1}$. 
By equation 
\eqref{eqn:adjhtime}, we have
$$
H(v_i,v_{i+1})  = \sum_{v \in L} \deg(v) = 2|L| -1.
$$
If $|L| \geq (n+1)/2$, then inequality \eqref{eqn:throw-leaf2} holds. 
Otherwise $|L| < (n+1)/2$, so we must have  $i \leq d/2$ because the left end of the tree has $\ell+1 > r$ leaves. So we can define
$$ M = \bigcup_{k=i+1}^{d-i-1} V_k \qquad \mbox{and} \qquad R = \bigcup_{k=d-i}^{d} V_k,
$$
corresponding to the  middle and right end of the tree. 
We have $|M| = d-2i+1$ and  $|R| < |L|$, and therefore
\begin{align}
\label{eqn:throw-last-step}
|E| &= |L| + |M| + |R| -1 < 2|L| -1 + (d-2i+1) \leq H(v_i, v_{i+1}) + d -i -3
\end{align}
because $i \geq 2$.
Therefore inequality \eqref{eqn:throw-leaf2} holds, and the proof is complete.
%\qed  
\end{proof}

\begin{corollary}
\label{cor:more-balanced-double-broom}
Let $G \in \double{n}{d}$ be a double broom with $\ell$ left leaves and $r$ right leaves, where $\ell \geq r+2$. 
Then the double broom $\newG$ obtained by leaf transplant $\tau(G,P; (1,d-1))$ satisfies $\Tmix(\newG) > \Tmix(G)$.
\end{corollary}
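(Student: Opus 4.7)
The approach is a direct algebraic computation using Lemma \ref{lemma:double-broom-mixing-time}. The leaf transplant $\tau(G,P;(1,d-1))$ preserves $n$, $d$, and the sum $\ell+r = n-d+1$; it only changes the pair $(\ell, r)$ to $(\ell-1, r+1)$. The plan is to regroup the numerator of $\Tmix(G)$ so that the part which moves under this operation is isolated. Expanding the formula in Lemma \ref{lemma:double-broom-mixing-time} and collecting by symmetry in $\ell, r$, one obtains
\begin{align*}
6(n-1)\,\Tmix(G) &= \bigl(2d^3 - 12d^2 + 37d - 42\bigr) + (6d^2 - 24d + 33)(\ell+r) \\
&\qquad + 12(d-2)\,\ell r.
\end{align*}
The first two groups depend only on $n$, $d$, and $\ell+r$, and are therefore invariant under the transplant.

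Only the $\ell r$ term changes. Using the elementary identity $(\ell-1)(r+1) - \ell r = \ell - r - 1$, the numerator increases by exactly $12(d-2)(\ell-r-1)$, giving
$$\Tmix(\newG) - \Tmix(G) = \frac{2(d-2)(\ell-r-1)}{n-1}.$$
By hypothesis $d \geq 3$ and $\ell \geq r+2$, so both $d-2 \geq 1$ and $\ell-r-1 \geq 1$, and the difference is strictly positive, as required. A brief sanity check on the setup: since $\ell \geq r+2 \geq 3$, the vertex $v_1$ has at least one non-spine leaf available to transplant, and after the operation $v_1$ still has $v_0$ as a leaf, so the diameter remains $d$ and $\newG \in \double{n}{d}$.

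There is essentially no obstacle beyond the bookkeeping for the regrouping step, which (as the authors note after Lemma \ref{lemma:double-broom-mixing-time}) is best verified with a computer algebra system. One could alternatively reprove the statement from scratch using Lemma \ref{lemma:double-broom-hitting-times}, computing $H(v_0,v_d) - H(\pi,v_d)$ for both $G$ and $\newG$ and subtracting, but this performs the same calculation at a lower level of abstraction and does strictly more work than piggybacking on the formula already in hand.
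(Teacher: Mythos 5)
Your proof is correct and takes a genuinely different---and cleaner---route than the paper's. The paper derives the corollary by re-running the proof of Lemma~\ref{lemma:near-double-broom} with $i=1$: it re-walks the six-case vertex-by-vertex calculation of $\newdeg(v)\newH(v,v_d)-\deg(v)H(v,v_d)$, arrives at the condition $|E| \leq H(v_1,v_2)+d-4$, and verifies it with the combinatorial bound $|E|=|L|+|M|+|R|-1 \leq 2|L|-1+d-4$ using $|R|\leq|L|-2$ and $|M|=d-2$. You instead observe that Lemma~\ref{lemma:double-broom-mixing-time} already gives a closed form for $\Tmix$ over all double brooms with fixed $n,d,\ell+r$, and that the transplant changes only the cross term $\ell r$. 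Your regrouping of the numerator is verifiably correct: the $\ell r$ coefficient is $12d-24=12(d-2)$, the $(\ell+r)$ coefficient is $6d^2-24d+33$, and the constant is $2d^3-12d^2+37d-42$. The resulting identity
$$\Tmix(\newG)-\Tmix(G)=\frac{2(d-2)(\ell-r-1)}{n-1}$$
is an explicit formula for the increase, which the paper's proof does not produce; that is a tangible advantage of your approach, since it also quantifies by how much imbalance hurts. The paper's approach has the virtue of unifying the double-broom case with the near-double-broom case of Lemma~\ref{lemma:near-double-broom} (where no closed form is available), but for the corollary alone yours is shorter and, once the formula in Lemma~\ref{lemma:double-broom-mixing-time} is trusted, essentially a one-line computation. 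Your sanity checks (that $\ell-1\geq 1$ so $\newG$ stays in $\double{n}{d}$, and that $d\geq 3$ makes $d-2\geq 1$) close the loop correctly.
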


\begin{proof}
Treat the  double broom $G$ as having $\ell+1$  vertices adjacent to $v_1$ and $r$  vertices adjacent to $v_{d-1}$. Prior to equation \eqref{eqn:throw-last-step}, the argument is identical to the proof of  Lemma \ref{lemma:near-double-broom}, with $i=1$. Our desired condition \eqref{eqn:throw-leaf2} becomes
$|E| \leq H(v_1,v_2) + d-4$. We know that $|R| \leq |L| -2$ and we have $|M|=d-2$. Therefore
$$
|E| = |L| + |M| + |R| -1 \leq 2|L|-2 +d-2 -1 = 2|L|-1 + d-4
= H(v_1,v_2) + d-4.
$$
The resulting double broom $\newG$ has $\ell$ left leaves and $r+1$ right leaves, and its mixing time is larger.
%\qed  
\end{proof}

\subsection{Proof of Theorem \ref{thm:broom-max-mixing}}
\label{sec:end}

We now present the proof of our main theorem.

\begin{lemma}
\label{lemma:broom-max-mixing}
For all $d \geq 3$, the quantity $\max_{G \in \Tnd} \Tmix(G)$ is achieved by the balanced double broom $\Dnd$.
\end{lemma}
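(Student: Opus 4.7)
The plan is to transform an arbitrary $G \in \Tnd$ into a balanced double broom by chaining together the three phases developed in this section, then compare the result to $\Dnd$ using the monotonicity of the mixing time in the diameter. At each step in the chain, the mixing time either strictly increases or the step does not apply because $G$ is already in the desired form.

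First I would check whether $G$ is a caterpillar. If not, apply Lemma \ref{lemma:caterpillarify-mixing-time} to replace $G$ by a caterpillar $G_1 \in \tree{n}{s}$ with $s \leq d$ and $\Tmix(G_1) > \Tmix(G)$; otherwise set $G_1 = G$ and $s = d$. Next, if $G_1$ has at least two leaves adjacent to the interior spine vertices $\{v_2,\dots,v_{s-2}\}$, apply Lemma \ref{lemma:to-appendix-broom} to produce $G_2 \in \tree{n}{s}$ that is either a double broom or a near double broom with $\Tmix(G_2) > \Tmix(G_1)$. If $G_2$ is a near double broom, apply Lemma \ref{lemma:near-double-broom} to convert it into a double broom $G_3$ with $\Tmix(G_3) > \Tmix(G_2)$. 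Finally, if $G_3$ is unbalanced, iterate Corollary \ref{cor:more-balanced-double-broom}, each application moving one pendant leaf from the larger end to the smaller end and strictly increasing the mixing time, until we arrive at the balanced double broom $D_{n,s}$.

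At this point $\Tmix(G) \leq \Tmix(D_{n,s})$ for some $s \leq d$. If $s = d$, we are done since $D_{n,s} = \Dnd$. Otherwise $s < d$, and Corollary \ref{cor:balanced-double-brooms-increasing} yields $\Tmix(D_{n,s}) < \Tmix(\Dnd)$, giving $\Tmix(G) \leq \Tmix(\Dnd)$ as required.

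The main obstacle is primarily organizational: one must verify that whenever a phase's precondition fails, $G$ is already in the output form of that phase, so the chain of inequalities still closes. The one genuinely nontrivial point is the diameter reduction that can occur in Phase One (when the pessimal path is strictly shorter than a geodesic); it is precisely Corollary \ref{cor:balanced-double-brooms-increasing} that bridges this gap, allowing us to compare $D_{n,s}$ for $s < d$ back to $\Dnd \in \Tnd$ and complete the argument.
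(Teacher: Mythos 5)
Your proof is correct and follows the same route as the paper: chain Lemma \ref{lemma:caterpillarify-mixing-time} (Phase One), Lemma \ref{lemma:to-appendix-broom} (Phase Two), Lemma \ref{lemma:near-double-broom} and Corollary \ref{cor:more-balanced-double-broom} (Phase Three), then close the $s < d$ gap with Corollary \ref{cor:balanced-double-brooms-increasing}. The one cosmetic difference is that the paper dresses the argument up as a strong induction on the diameter $d$ (with base case $d = 3$), but the inductive hypothesis is never actually invoked --- the comparison $\Tmix(D_{n,s}) < \Tmix(D_{n,d})$ for $s < d$ comes directly from Corollary \ref{cor:balanced-double-brooms-increasing}, not from appealing to the lemma for smaller diameters. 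Your non-inductive formulation is therefore mathematically equivalent and arguably cleaner; the paper's induction wrapper is superfluous. Your final observation about verifying that precondition failures leave $G$ in the required form is also the right thing to check (e.g., zero leaves on the interior spine means $G$ is already a double broom, one leaf means a near double broom), and your fall-through handles these cases correctly.
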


\begin{proof}
We give a strong induction proof on the diameter $d$ of the tree. Starting with the base case $d=3$, every tree  $G \in \tree{n}{3}$ is a caterpillar. In fact, each $G$ is also a double broom. 
If $G \neq B_{n,3}$, is not a balanced double broom, then  by Corollary \ref{cor:more-balanced-double-broom}, making it closer to balanced increases the mixing time. We can repeat this process until we have a balanced double broom, and therefore 
$$\Tmix(B_{n,3}) = \max_{G \in \tree{n}{3}} \Tmix(G).$$

Assume the statement holds for trees (of all orders) with diameter at most $d-1$. 
Now consider a  tree $G \in \tree{n}{d}$ that is not the double broom $\Dnd$. If $G$ is not a caterpillar, then our first step is to replace $G$ with a caterpillar with a larger mixing time. Let $P= \{ v_0, v_1, \ldots v_s \}$ be a pessimal path of $G$ where $s \leq d$.  By Lemma \ref{lemma:caterpillarify-mixing-time}, we can create a caterpillar $\newG \in \tree{n}{s}$ with $\Tmix(\newG) > \Tmix(G)$. 

So now assume that $G \in \tree{n}{s}$ is a caterpillar where $s \leq d$. If there are at least two leaves adjacent to $\{v_2, v_3, \ldots, v_{s-2}\}$, then we use Lemma \ref{lemma:to-appendix-broom} (with $d=s$) to create tree $\newnewG \in \tree{n}{s}$ where $\Tmix(\newnewG) > \Tmix(G)$. The tree $\newnewG$ is either a double broom, or it is a near double broom with one  leaf adjacent to $\{v_2, \ldots, v_{s-2}\}$. In the latter case, we use Lemma \ref{cor:more-balanced-double-broom} to create a double broom with a larger mixing time. 

We are now in the endgame of our process. If our double broom is not balanced, we repeatedly apply Corollary \ref{cor:more-balanced-double-broom} to create the balanced broom $B_{n,r}$, increasing the mixing time at each step. Finally, if $s < d$, then Lemma \ref{cor:balanced-double-brooms-increasing} shows that $\Tmix(B_{n,r}) < \Tmix(B_{n,d})$.

The mixing time is strictly increasing during this process, so we know that $\Tmix(G) < \Tmix(B_{n,d})$ for any other tree $G \in \tree{n}{d}$. Therefore $\Tmix(\Dnd) = \max_{G \in \tree{n}{d}} \Tmix(G).$ 
%\qed  
\end{proof}

Finally, we prove our main theorem.

\begin{proof}[Proof of Theorem \ref{thm:broom-max-mixing}]
 Lemma \ref{lemma:broom-max-mixing} shows that the balanced double broom $\Dnd$ is the unique tree that achieves
 $\max_{G \in \tree{n}{d}} \Tmix(G)$. The mixing time formulas \eqref{eqn:broom-max-mixing-odd} and \eqref{eqn:broom-max-mixing-even} for $\Tmix(\Dnd)$ were proven in Corollary \ref{cor:balanced-double-broom-mixing-time}.  
%\qed  
\end{proof}

\section{Conclusion}
\label{sec:conclusion}

We have shown that the balanced double broom $\Dnd$ achieves the maximum mixing time among the trees in $\tree{n}{d}$ with $n$ vertices and diameter $d$.
The formula of Theorem \ref{thm:mixing-time} reveals that the mixing time of the balanced double broom is $\Theta(n d)$. More precisely, when the diameter $d=d(n)$ is a function of $n$, we have
$$
\Tmix (D_{n, d(n)}) = 
\begin{cases}
\frac{d-2}{2} n + o(n) & \mbox{when $d$ is constant}, \\
\frac{1}{2} n \, d(n)+ o(n \, d(n)) & \mbox{when $d(n) = o(n)$ is sublinear}, \\
\frac{3c-c^3}{6}n^2 + o(n^2) & \mbox{when } d(n) = c\,n + o(n) \mbox{ where } 0 < c \leq 1.
\end{cases}
$$

There are many open questions about random walks on the family $\tree{n}{d}$ of trees of order $n$ and diameter $d$. 
For example, it would be interesting to characterize the behavior of  the \emph{reset time} $\Treset = \sum_{v \in V} \pi_v H(v, \pi)$ and thee \emph{best mixing time} $\Tbestmix = \min_{v \in V} H(v,\pi)$.
Another direction would be to identify  the \emph{minimizing} structure in $\tree{n}{d}$ for each of these exact mixing measures. 
Finally, we could ask these same questions about the family of all graphs of diameter $d$: which graphs achieve the extremal values of each of these mixing measures?

\bibliographystyle{plain} 
\bibliography{biblio}

\end{document}